\documentclass[12pt]{amsart}

\usepackage[english]{babel}

\usepackage[
pdftex,hyperfootnotes]{hyperref}
\hypersetup{
	colorlinks=true,
	linkcolor=NavyBlue, 
	urlcolor=RoyalPurple,
	citecolor=OliveGreen,
	pdftitle={Bidiagonal factorization of tetradiagonal  matrices and Darboux transformations },
	bookmarks=true,
	pdfpagemode=FullScreen,
}
\usepackage[usenames,dvipsnames,svgnames,table,x11names]{xcolor}
\usepackage{pgfplots}
\usepackage{tikz}
\usepackage{tikz-3dplot}
\usetikzlibrary{automata,quotes, chains,matrix,calc,shadows,shapes.callouts,shapes.geometric,shapes.misc,positioning,patterns,decorations.shapes,
	decorations.pathmorphing,decorations.markings,decorations.fractals,decorations.pathreplacing,shadings,fadings,arrows.meta,bending}

\usepackage{nicematrix}
\usepackage[utf8]{inputenc}
\usepackage{polski}
\usepackage{comment}
\usepackage[textwidth=17.5cm,textheight=22.275cm,
height=
24.275cm,
width=18cm]{geometry}

\usepackage{rotating,multirow,pdflscape}
\usepackage{amssymb,latexsym,amsmath,amsthm,bm}
\usepackage{mathrsfs}
\usepackage{mathtools,arydshln,mathdots}
\usepackage{bigints}
\makeatletter
\renewcommand*\env@matrix[1][*\c@MaxMatrixCols c]{%
	\hskip -\arraycolsep
	\let\@ifnextchar\new@ifnextchar
	\array{#1}}
\makeatother

\mathtoolsset{centercolon}
\usepackage[table]{xcolor}
\usepackage[toc,page]{appendix}

\usepackage{tocvsec2}

\usepackage{Baskervaldx}
\usepackage[]{newtxmath }
%



%

\newtheorem{coro}{Corollary}

\newtheorem{defi}{Definition}
\newtheorem{teo}{Theorem}

\newtheorem{pro}{Proposition}
\newtheorem{lemma}{Lemma}

\newtheorem{rem}{Remark}

\renewcommand{\d}{\operatorname{d}}

\newcommand{\N}{\mathbb{N}}
\newcommand{\R}{\mathbb{R}}
\newcommand{\Z}{\mathbb{Z}}

\allowdisplaybreaks[1]

\makeatletter
\DeclareRobustCommand{\gaussk}{\DOTSB\gaussk@\slimits@}
\newcommand{\gaussk@}{\mathop{\vphantom{\sum}\mathpalette\bigcal@{K}}}

\newcommand{\bigcal@}[2]{%
	\vcenter{\m@th
		\sbox\z@{$#1\sum$}%
		\dimen@=\dimexpr\ht\z@+\dp\z@
		\hbox{\resizebox{!}{0.8\dimen@}{$\mathcal{K}$}}%
	}%
}
\newcommand{\cfracplus}{\mathbin{\cfracplus@}}
\newcommand{\cfracplus@}{%
	\sbox\z@{$\dfrac{1}{1}$}%
	\sbox\tw@{$+$}%
	\raisebox{\dimexpr\dp\tw@-\dp\z@\relax}{$+$}%
}
\newcommand{\cfracdots}{\mathord{\cfracdots@}}
\newcommand{\cfracdots@}{%
	\sbox\z@{$\dfrac{1}{1}$}%
	\sbox\tw@{$+$}%
	\raisebox{\dimexpr\dp\tw@-\dp\z@\relax}{$\cdots$}%
}
\makeatother
%
%

\begin{document}

\title[]
{Bidiagonal factorization of tetradiagonal  matrices and Darboux transformations }

\author[A Branquinho]{Amílcar Branquinho$^{1,\flat}$}
\address{$^1$Departamento de Matemática,
	Universidade de Coimbra, 3001-454 Coimbra, Portugal}
\email{ajplb@mat.uc.pt}

\author[A Foulquié]{Ana Foulquié-Moreno$^{2,\natural}$}
\address{$^2$Departamento de Matemática, Universidade de Aveiro, 3810-193 Aveiro, Portugal}
\email{foulquie@ua.pt}

\author[M Mañas]{Manuel Mañas$^{3,\clubsuit}$}
\address{$^3$Departamento de Física Teórica, Universidad Complutense de Madrid, Plaza Ciencias 1, 28040-Madrid, Spain \&
	Instituto de Ciencias Matematicas (ICMAT), Campus de Cantoblanco UAM, 28049-Madrid, Spain}
\email{manuel.manas@ucm.es}

\thanks{$^\flat$Acknowledges Centro de Matemática da Universidade de Coimbra (CMUC) -- UID/MAT/00324/2019, funded by the Portuguese Government through FCT/MEC and co-funded by the European Regional Development Fund through the Partnership Agreement PT2020}

\thanks{$^\natural$Acknowledges CIDMA Center for Research and Development in Mathematics and Applications (University of Aveiro) and the Portuguese Foundation for Science and Technology (FCT) within project UIDB/MAT/UID/04106/2020 and UIDP/MAT/04106/2020}

\thanks{$^\clubsuit$Thanks financial support from the Spanish ``Agencia Estatal de Investigación'' research project [PGC2018-096504-B-C33], \emph{Ortogonalidad y Aproximación: Teoría y Aplicaciones en Física Matemática} and [PID2021- 122154NB-I00], \emph{Ortogonalidad y Aproximación con Aplicaciones en Machine Learning y Teoría de la Probabilidad}.}

\keywords{Tetradiagonal Hessenberg matrices, oscillatory matrices, totally nonnegative matrices, multiple orthogonal polynomials, Favard spectral representation, Darboux transformations, Christoffel formulas}

\subjclass{42C05,33C45,33C47}

\begin{abstract}
		Recently a spectral Favard theorem for  bounded banded lower Hessenberg matrices that admit a  positive bidiagonal factorization was presented. 
		These type of matrices are oscillatory.
In this paper the Lima--Loureiro hypergeometric multiple orthogonal polynomials   and the Jacobi--Piñeiro multiple orthogonal polynomials are discussed at the light of this bidiagonal factorization for tetradiagonal matrices. The Darboux transformations of  tetradiagonal Hessenberg matrices is studied  
and Christoffel formulas for the elements of the bidiagonal factorization are given,  i.e.,  the bidiagonal factorization is given in terms of the recursion polynomials  evaluated at the origin.
	\end{abstract}
	
	\maketitle
	
	
	\section{Introduction}

In this paper we will analyze some aspects  for  the tetradiagonal Hessenberg matrix of  the form
\begin{align}\label{eq:monic_Hessenberg}
	T&=\left[\begin{NiceMatrix}[columns-width = auto]
		c_0 & 1&0&\Cdots&\\[-3pt]
		b_1&c_1 & 1&\Ddots&&\\
		a_2&b_2&c_2&1&&\\
		0&a_3&b_3&c_3&1&\\
		\Vdots&\Ddots&\Ddots&\Ddots&\Ddots&\Ddots\\&&&&
	\end{NiceMatrix}\right], 
\end{align}
where we assume that $a_n>0$, and its  bidiagonal factorization
	\begin{align}\label{eq:bidiagonal}
	T=  L_{1}  L_{2}  U,
\end{align}
with bidiagonal matrices  given by 
\begin{align}\label{eq:LU_J_oscillatory_factors}
	L_1&=
	\left[\begin{NiceMatrix}[columns-width = auto]
		1 &0&0&\Cdots&\\
		\alpha_{2} & 1 &0&\Cdots\\
		0& \alpha_{5} & 1& \Ddots\\
		\Vdots& \Ddots& \Ddots & \Ddots
	\end{NiceMatrix}\right],& L_2&=
	\left[\begin{NiceMatrix}[columns-width = auto]
		1 &0&0&\Cdots&\\
		\alpha_{3} & 1 &0&\Cdots\\
		0& \alpha_{6} & 1& \Ddots\\
		\Vdots& \Ddots& \Ddots & \Ddots
	\end{NiceMatrix}\right] ,&U& =
	\left[\begin{NiceMatrix}[columns-width = auto]
		\alpha_1 & 1 &0&\Cdots&\\
		0& \alpha_4 & 1 &\Ddots&\\
		0&0&\alpha_7&\Ddots&\\
		\Vdots& \Ddots& \Ddots &\Ddots &
	\end{NiceMatrix}\right].
\end{align}
If the requirement 
\begin{align*}
	\alpha_j&>0, & j\in\N,
	\end{align*}
is fulfilled, we say that we have a  \emph{positive bidiagonal factorization} (PBF).
 In \cite{previo}, this factorization was shown to be the key for a Favard theorem for bounded banded Hessenberg semi-infinite matrices and the existence of  positive measures such that the recursion polynomials are multiple orthogonal polynomials and the Hessenberg matrix is the recursion matrix from this set of multiple orthogonal polynomials. We also gave a multiple Gauss quadrature together with explicit degrees of precision. Then, in \cite{previo2} we studied for the tetradiagonal case when such PBF, in terms of continued fractions, for oscillatory matrices exists.
 Oscillatory tetradiagonal Toeplitz matrices were shown to  admits a PBF. Moreover, it was proven that oscillatory banded Hessenberg matrices are organized in rays, with the origin of the ray not having the positive bidiagonal factorization and all the interior points of the ray having such positive bidiagonal factorization. 
 
In the next  section of this paper we  succinctly discuss two cases that appear in the literature, the Jacobi--Piñeiro \cite{pineiro,Ismail} and the hypergeometric \cite{Lima-Loureiro,nuestro2} families.
In the final section, the Darboux and Christoffel transformations \cite{bfm} are connected with the PBF factorization,  the coefficients  $\alpha$ in the PBF are reconstructed in terms of the values of the type II and I polynomials at $0$, see Theorem \ref{teo:alphasBA} and the Darboux transformations are discussed at the  light of  the spectral Christoffel perturbations \cite{afm}, see Theorem \ref{theorem:3}. 
 
 \subsection{Preliminary material}
 
 For multiple orthogonal polynomials see \cite{nikishin_sorokin,Ismail} and \cite{afm}.
 
 Let us denote by $ T^{[N]}=T[\{0,1,\dots,N\}]\in\R^{(N+1)\times(N+1)}$ the $(N+1)$-th leading principal  submatrix  of the banded Hessenberg  matrix $T$: 
 \begin{align}\label{eq:Hessenberg_truncation}
 	T^{[N]}\coloneq \begin{bNiceMatrix}[columns-width = auto]
 		c_0 & 1&0&\Cdots&&&0\\[-3pt]
 		b_1&c_1&1&\Ddots&&&\Vdots\\
 		a_2&b_2&c_2&1&&&\\
 		0&a_3&b_3&c_3&1&&\\
 		\Vdots&\Ddots&\Ddots&\Ddots&\Ddots& \Ddots&0\\
 		&&&a_{N-1}&b_{N-1}&c_{N-1}&1\\
 		0&\Cdots&&0&a_{N}&b_{N}&c_N
 	\end{bNiceMatrix}.
 \end{align}

\begin{defi}[Recursion polynomials of type II]\label{def:typeII}
The type II recursion vector of polynomials
\begin{align*}
B(x)&=
\begin{bNiceMatrix}[columns-width = auto]
B_0(x)\\
B_1(x)\\
\Vdots
\end{bNiceMatrix}, & \deg B_n=n,
\end{align*}
is determined by the following eigenvalue equation
\begin{align}\label{eq:recurrence_J}
TB(x)=x B(x).
\end{align}
Uniqueness is ensured by taking as initial condition $B_0=1$.
We call the components $B_n$  type II recursion polynomials. One obtains that
 $B_1=x- c_0$, $B_2=(x-c_0)(x-c_1)-b_1$, and  higher degree recursion polynomials are constructed  by means of the 4-term recurrence relation
\begin{align}\label{eq:recurrence_B}
B_{n+1}&=(x-c_n)B_n-b_nB_{n-1}-a_n B_{n-2}, & n&\in\{2,3,\dots\}.
\end{align}
\end{defi}

\begin{defi}[Recursion polynomials of type I]\label{def:typeI}
Dual to the polynomial vector $B(x)$ we consider the two following    polynomial dual vectors
\begin{align*}
	A^{(1)}(x)&=\left[\begin{NiceMatrix}
			A^{(1)}_0(x) & 	A^{(1)}_1(x)&\Cdots
		\end{NiceMatrix}\right], & 
	A^{(2)}(x)&=\left[\begin{NiceMatrix}
			A^{(2)}_0(x) & 	A^{(2)}_1(x)&\Cdots
		\end{NiceMatrix}\right], 
\end{align*}
that are left eigenvectors of the semi-infinite matrix  $J$, i.e.,
\begin{align*}
	A^{(1)}(x) T&=x 	A^{(1)}(x), & A^{(2)}(x) T&=xA^{(2)}(x).
\end{align*} 
The initial conditions, that determine these polynomials uniquely,  are taken as
\begin{align*}
	A^{(1)}_0&=1, & A^{(1)}_1&=\nu,&
	A^{(2)}_0&=0, & A^{(2)}_1&=1,
\end{align*}
with $\nu\neq 0$ being an arbitrary constant. Then, from the first the relation
\begin{align*}
	c_0A^{(a)}_0+b_{1}A^{(a)}_{1}+a_2A^{(a)}_{2}&=xA^{(a)}_0, & a&\in\{1,2\},
\end{align*}
we get
$A^{(1)}_2=\frac{x}{a_2}-\frac{c_0+b_1\nu}{a_2}$ and
$A^{(2)}_2=-\frac{b_1}{a_2}$.
The other polynomials in these sequences are determined by the following four term recursion relation
\begin{align}\label{eq:recursion_dual_A}
A^{(a)}_n	a_n&=-A^{(a)}_{n-1}b_{n-1}+A^{(a)}_{n-2}(x-c_{n-2})-A^{(a)}_{n-3}, & n&\in\{3,4,\dots\}, & a&\in\{1,2\}.
\end{align}
\end{defi}
For example, one finds
\begin{align*}
A^{(1)}_3	a_3&=-A^{(1)}_2b_2+A^{(1)}_1(x-c_1)-A^{(1)}_0=
	-b_2\Big(\frac{x}{a_2}-\frac{c_0+b_1\nu}{a_2}\Big)+\nu (x-c_1)-1,\\
A^{(2)}_3	a_3 &=-A^{(2)}_2b_2+A^{(2)}_1(x-c_1)-A^{(2)}_0=
	b_2\frac{b_1}{a_2}+x-c_1. 
\end{align*}


Second kind polynomials are also relevant in the theory of multiple orthogonality.
\begin{defi}[Recursion polynomials of type II of the second kind]
Let us consider the recursion relation  \eqref{eq:recurrence_B} in the form
\begin{align}\label{eq:recurrence_B_2}
a_nB_{n-2}+b_nB_{n-1}+c_nB_n+B_{n+1}=xB_n,
\end{align}
 set $b_0=a_0=a_1=-1$ and   $n\in\N_0$. The values, \emph{initial conditions}, for $B_{-2}, B_{-1}, B_0$ are required to get the values $B_n$ for  $n\in\N$. The polynomials of type II correspond to the choice
\begin{align}
B_{-2}&=0, &B_{-1}&=0, &B_0&=1.
\end{align}
Two sequences of polynomials of type II of the second kind $\big\{B_n^{(1)}\big\}_{n=0}^\infty$ and $\big\{B_n^{(2)}\big\}_{n=0}^\infty$  are defined by the following initial conditions
\begin{align}
B^{(1)}_{-2}&=1, &B^{(1)}_{-1}&=0, &B^{(1)}_0&=0,\\
B^{(2)}_{-2}&=-1-\nu, &B^{(2)}_{-1}&=1, &B^{(2)}_0&=0.
\end{align}
\end{defi}
\begin{pro}[Determinantal expressions]\label{pro:determintal_second_kind}
	\begin{enumerate}
		\item 	For the recursion polynomials we have the determinantal expressions
		\begin{align}\label{eq:det_B}
			B_{N+1}=\det\big(x I_{N+1}- T^{[N]}\big)=\begin{vNiceMatrix}[columns-width = 10pt]
				x-c_0 & -1&0&\Cdots&&&0\\[-3pt]
				-b_1&x-c_1&-1&\Ddots&&&\Vdots\\
				-a_2&-b_2&x-c_2&-1&&&\\
				0&-a_3&-b_3&x-c_3&-1&&\\
				\Vdots&\Ddots&\Ddots&\Ddots&\Ddots& \Ddots&0\\
				&&&-a_{N-1}&-b_{N-1}&x-c_{N-1}&-1\\
				0&\Cdots&&0&-a_{N}&-b_{N}&x-c_N
			\end{vNiceMatrix}.
		\end{align}
		Hence, they are the characteristic polynomials of the leading principal submatrices $T^{[N]}$.
		\item For the  recursion polynomials of type II of the second kind,  $B^{(1)}_{N+1} $ and $B^{(2)}_{N+1} $,  we  have the following adjugate and determinantal expressions 
		\begin{align*}
			B^{(1)}_{N+1} &= e_1^\top \operatorname{adj}\big(x I_{N+1}-T^{[N]}\big)e_1=\begin{vNiceMatrix}[columns-width = 10pt]
				x-c_1 & -1&0&\Cdots&&&0\\[-3pt]
				-b_2&x-c_2&-1&\Ddots&&&\Vdots\\
				-a_3&-b_3&x-c_3&-1&&&\\
				0&-a_4&-b_4&x-c_4&-1&&\\
				\Vdots&\Ddots&\Ddots&\Ddots&\Ddots& \Ddots&0\\
				&&&-a_{N-1}&-b_{N-1}&x-c_{N-1}&-1\\
				0&\Cdots&&0&-a_{N}&-b_{N}&x-c_N
			\end{vNiceMatrix},\\
			B^{(2)}_{N+1} &=  e_1^\top \operatorname{adj}\big(x I_{N+1}-T^{[N]}\big)(e_2-\nu e_1)
			=b^{(1)}_{N+1}-\nu B^{(1)}_{N+1} , \\ b^{(1)}_{N+1}&= e_1^\top \operatorname{adj}\big(x I_{N+1}-T^{[N]}\big)e_2=
			\begin{vNiceMatrix}[columns-width = 10pt]
				x-c_2 & -1&0&\Cdots&&&0\\[-3pt]
				-b_3&x-c_3&-1&\Ddots&&&\Vdots\\
				-a_4&-b_4&x-c_4&-1&&&\\
				0&-a_5&-b_5&x-c_5&-1&&\\
				\Vdots&\Ddots&\Ddots&\Ddots&\Ddots& \Ddots&0\\
				&&&-a_{N-1}&-b_{N-1}&x-c_{N-1}&-1\\
				0&\Cdots&&0&-a_{N}&-b_{N}&x-c_N
			\end{vNiceMatrix}.
		\end{align*}
	\end{enumerate}
\end{pro}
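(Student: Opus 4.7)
My plan for both items rests on the same principle: the claimed determinantal expression satisfies the four-term recursion \eqref{eq:recurrence_B}--\eqref{eq:recurrence_B_2} and coincides with the target sequence on the relevant initial segment, so the two sides agree at every index.

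For part (1), set $\Delta_N(x)\coloneq \det(xI_{N+1}-T^{[N]})$ and Laplace-expand along the last row of this matrix, whose only nonzero entries are $-a_N,\,-b_N,\,x-c_N$ in columns $N-2$, $N-1$ and $N$. The three resulting minors, after at most two further expansions along freshly created columns that carry a single $-1$, telescope respectively to $\Delta_{N-3}$, $-\Delta_{N-2}$ and $\Delta_{N-1}$. Assembling the signs yields
\begin{equation*}
\Delta_N = (x-c_N)\Delta_{N-1}-b_N\Delta_{N-2}-a_N\Delta_{N-3},
\end{equation*}
which is exactly the recursion \eqref{eq:recurrence_B} under the identification $\Delta_k\leftrightarrow B_{k+1}$; the base cases $\Delta_0 = x-c_0 = B_1$ and $\Delta_1 = (x-c_0)(x-c_1)-b_1 = B_2$ complete the induction.

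Part (2) uses the identity $(\operatorname{adj} M)_{ij}=(-1)^{i+j}\det(M_{ji})$ with $M\coloneq xI_{N+1}-T^{[N]}$. For $B^{(1)}_{N+1}$, the entry $e_1^\top\operatorname{adj}(M)e_1 = \det(M_{11})$ is precisely the shifted Hessenberg determinant displayed; the Laplace-expansion argument of part~(1), carried over verbatim to the shifted indices, shows that it obeys the same four-term recursion as $B^{(1)}_{N+1}$, and the first three values are a direct check. For $b^{(1)}_{N+1}$, one has $e_1^\top\operatorname{adj}(M)e_2 = -\det(M_{21})$, and the first row of $M_{21}$ carries a single $-1$ in column~$1$; a single Laplace expansion along that row extracts $(-1)\cdot(-1)=+1$ and leaves the displayed $(N-1)\times(N-1)$ determinant, identified with $b^{(1)}_{N+1}$ by the same recursion-plus-initial-values argument. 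Finally, $B^{(2)}_{N+1}=b^{(1)}_{N+1}-\nu B^{(1)}_{N+1}$ is immediate from linearity of $\operatorname{adj}(M)(\,\cdot\,)$ in its right factor.

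The only delicate point I foresee is the index bookkeeping in the double Laplace expansions needed for the off-diagonal minors: one must check that the parities of the successive $(-1)^{i+j}$ factors combine correctly and that each telescoping step really produces a submatrix of type $xI_k-T^{[k-1]}$ (or its index-shifted analogue) without indexing errors. Once that is in place, the comparison of recursions and initial conditions is routine.
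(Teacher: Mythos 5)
Your argument is correct, and in fact the paper states this proposition without giving a proof, so there is nothing to compare against: the route you take (Laplace expansion along the last row to show that each displayed determinant obeys the four-term recursion \eqref{eq:recurrence_B}, identification of the adjugate entries with signed complementary minors via $(\operatorname{adj}M)_{ij}=(-1)^{i+j}\det(M_{ji})$, and matching of the first few values) is the standard and intended one. Your sign bookkeeping checks out: the last-row expansion gives $\Delta_N=(x-c_N)\Delta_{N-1}-b_N\Delta_{N-2}-a_N\Delta_{N-3}$, the entry $e_1^\top\operatorname{adj}(M)e_1$ is $\det(M_{11})$, and the extra factor $(-1)$ from $(\operatorname{adj}M)_{12}=-\det(M_{21})$ is cancelled by the single $-1$ in the first row of $M_{21}$, leaving exactly the doubly shifted determinant. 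The one place where your wording undersells what is needed is the $B^{(2)}$ claim: linearity of $e_1^\top\operatorname{adj}(M)(\,\cdot\,)$ only gives $e_1^\top\operatorname{adj}(M)(e_2-\nu e_1)=b^{(1)}_{N+1}-\nu B^{(1)}_{N+1}$; to conclude that this equals the recursion-defined $B^{(2)}_{N+1}$ you must also invoke linearity of the recursion \eqref{eq:recurrence_B_2} in the initial data, noting that $(B^{(2)}_{-2},B^{(2)}_{-1},B^{(2)}_{0})=(-1,1,0)-\nu\,(1,0,0)$ so that $B^{(2)}_n=b^{(1)}_n-\nu B^{(1)}_n$ for all $n$, where $b^{(1)}$ is the solution with data $(-1,1,0)$ (equivalently, check the first three values $b^{(1)}_1-\nu B^{(1)}_1=-\nu$, $b^{(1)}_2-\nu B^{(1)}_2=1-\nu(x-c_1)$, etc.). With that one sentence added, the proof is complete; the edge cases of small $N$, where the expansion produces fewer than three terms, are also consistent with the conventions $B_0=1$, $B^{(1)}_1=1$, $b^{(1)}_2=1$, as you anticipated in your remark on index bookkeeping.
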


Finite truncations of this matrices having this positive bidiagonal factorization are oscillatory matrices. In fact,  we will be dealing in this paper with totally non negative matrices and oscillatory matrices and, consequently,  we require of some definitions and  properties that we are about to present succinctly.

%

Further truncations are
\begin{align}\label{eq:TNk}
	T^{[N,k]}&\coloneq \begin{bNiceMatrix}[columns-width = 10pt]
		c_k & 1&0&\Cdots&&&0\\[-3pt]
		b_{k+1}&c_{k+1}&1&\Ddots&&&\Vdots\\
		a_{k+2}&b_{k+2}&c_{k+2}&1&&&\\
		0&a_{k+3}&b_{k+3}&c_{k+3}&1&&\\
		\Vdots&\Ddots&\Ddots&\Ddots&\Ddots& \Ddots&0\\
		&&&a_{N-1}&b_{N-1}&c_{N-1}&1\\
		0&\Cdots&&0&a_{N}&b_{N}&c_N
	\end{bNiceMatrix}\in\R^{(N+1-k)\times(N+1-k)}, & k&\in\{0,1,\dots,N\},\\
	T^{[N,N+1]}&\coloneq 1,
\end{align}
notice that $T^{[N]}=T^{[N,0]}$. 
Corresponding characteristic polynomials are:
\begin{align}\label{eq:BNk}
B^{[k]}_{N+1}&\coloneq \begin{vNiceMatrix}[columns-width = 10pt]
			x-c_k & -1&0&\Cdots&&&0\\[-3pt]
			-b_{k+1}&x-c_{k+1}&-1&\Ddots&&&\Vdots\\
			-a_{k+2}&-b_{k+2}&x-c_{k+2}&-1&&&\\
			0&-a_{k+3}&-b_{k+3}&x-c_{k+3}&-1&&\\
			\Vdots&\Ddots&\Ddots&\Ddots&\Ddots& \Ddots&0\\
			&&&-a_{N-1}&-b_{N-1}&x-c_{N-1}&-1\\
			0&\Cdots&&0&-a_{N}&-b_{N}&x-c_N
		\end{vNiceMatrix}, & k&\in\{0,1,\dots,N\}.
\end{align}

Totally nonnegative (TN) matrices are those with all their minors nonnegative \cite{Fallat-Johnson,Gantmacher-Krein}, and the set of nonsingular TN matrices is denoted by InTN. Oscillatory matrices \cite{Gantmacher-Krein} are totally nonnegative, irreducible \cite{Horn-Johnson} and nonsingular. Notice that the set of oscillatory matrices is denoted by IITN (irreducible invertible totally nonnegative) in \cite{Fallat-Johnson}. An oscillatory matrix~$A$ is equivalently defined as a totally nonnegative matrix $A$ such that for some $n$ we have that $A^n$ is totally positive (all minors are positive). From Cauchy--Binet Theorem one can deduce the invariance of these sets of matrices under the usual matrix product. Thus, following \cite[Theorem 1.1.2]{Fallat-Johnson} the product of matrices in InTN is again InTN (similar statements hold for TN or oscillatory matrices).
We have the important result:
\begin{teo}[Gantmacher--Krein Criterion] \cite[Chapter 2, Theorem 10]{Gantmacher-Krein}. \label{teo:Gantmacher--Krein Criterion}
	A~totally non negative matrix $A$ is oscillatory if and only if it is nonsingular and the elements at the first subdiagonal and first superdiagonal are positive.
\end{teo}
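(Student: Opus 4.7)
The plan is to establish both implications separately, relying on the identification of TN matrices with path matrices of weighted planar acyclic networks and on Cauchy--Binet applied to powers of $A$.

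For the direction ($\Rightarrow$), suppose $A$ is oscillatory, so by definition $A$ is TN, nonsingular, and $A^m$ is totally positive for some $m\ge 1$. Hence $(A^m)_{i,i+1}>0$ and $(A^m)_{i+1,i}>0$ for every admissible $i$. The key step is a ``propagation of zeros'' lemma: if $a_{i,i+1}=0$, the $2\times 2$ minor inequality $a_{i,i+1}a_{k,l}\ge a_{i,l}a_{k,i+1}$ with $k>i$, $l>i+1$ forces $a_{i,l}a_{k,i+1}=0$; combining this with the expansion $(A^m)_{i,i+1}=\sum_{p_1,\ldots,p_{m-1}} a_{i,p_1}a_{p_1,p_2}\cdots a_{p_{m-1},i+1}$ and an induction on $m$, every summand vanishes, so $(A^m)_{i,i+1}=0$, contradicting total positivity. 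The subdiagonal case is symmetric by transposition.

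For the direction ($\Leftarrow$), assume $A$ is TN, nonsingular, with positive first sub- and superdiagonal entries. I would proceed in two stages. First, show that $A^{n-1}$ has all entries strictly positive, where $n$ is the order of $A$: using the same path expansion, the distinguished path $i\to i\pm 1\to\cdots\to j$ along the first off-diagonals already contributes a product of positive numbers, so $(A^{|i-j|})_{i,j}>0$, and TN together with standard monotonicity propagates this to $A^{n-1}$. Second, upgrade entrywise positivity to total positivity of some higher power $A^M$: by Cauchy--Binet every minor of $A^M$ is a nonnegative sum of products of minors of $A$, and nonsingularity combined with the connectivity of the underlying network guarantees that at least one summand is strictly positive once $M$ is sufficiently large.

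The main obstacle is the second stage of the backward direction, namely certifying that \emph{every} minor (not just every entry) of $A^M$ is strictly positive for $M$ large. The natural organizing tool is the Lindström--Gessel--Viennot interpretation: a minor of $A^M$ equals a sum over families of $M$-step vertex-disjoint paths in the planar network encoding $A$, and the positivity of the first sub- and superdiagonals provides enough ``local moves'' to realize any admissible disjoint family once $M$ is large enough. Making this counting argument precise, and extracting an explicit threshold $M$, is where the real work concentrates; everything else reduces to the two minor-based inequalities above and Cauchy--Binet.
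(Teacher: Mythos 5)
The first thing to note is that the paper does not prove this statement at all: it is quoted directly from Gantmacher--Krein [Chapter 2, Theorem 10], so your proposal can only be compared with the classical proof, not with an argument in the paper. Your forward direction is essentially sound but incomplete as written. From $a_{i,i+1}=0$ the $2\times 2$ minors give only $a_{i,l}\,a_{k,i+1}=0$ for $k>i$, $l>i+1$, and that alone does not annihilate every summand of $(A^m)_{i,i+1}=\sum a_{i,p_1}a_{p_1,p_2}\cdots a_{p_{m-1},i+1}$: a path may leave row $i$ downward and re-enter column $i+1$ from below through an entry $a_{k,l}$ with $k\le i<l$ which your lemma does not control. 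What closes the argument is a fact you never invoke, namely that a nonsingular TN matrix has positive diagonal entries: taking $k=i+1$ gives $a_{i,l}=0$ for all $l>i$, the symmetric minors through $a_{ii}$ give $a_{k,i+1}=0$ for all $k\le i$, and one further $2\times 2$ minor through $a_{ii}$ kills the whole block $a_{kl}$, $k\le i<l$. Then $A$, and hence every power of $A$, is block lower triangular, contradicting total positivity of $A^m$. So this half is a fixable omission rather than a wrong approach.

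The backward direction, however, contains a genuine gap, and it sits exactly where the whole content of the theorem lies. Your stage one (positivity of all entries of $A^{n-1}$) is easy and standard; but the theorem asserts positivity of all \emph{minors} of a suitable power, and your stage two merely asserts that Cauchy--Binet plus ``connectivity'' and ``local moves'' in a planar network will produce, for every minor of $A^M$, at least one strictly positive summand --- you yourself concede that making this precise is ``where the real work concentrates''. This cannot be waved through: entrywise positivity of a nonsingular TN matrix does not imply total positivity (there are $3\times 3$ nonsingular TN matrices with all entries positive and a vanishing $2\times 2$ minor), so the positive sub- and superdiagonal hypothesis must enter through a nontrivial minor-level lemma. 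In Gantmacher--Krein's proof this is the positivity of all quasi-principal minors $A\binom{i_1,\ldots,i_p}{j_1,\ldots,j_p}$ with $\lvert i_k-j_k\rvert\le 1$, proved by induction and then chained through Cauchy--Binet to show that $A^{n-1}$ is totally positive; the modern alternative is the Loewner--Whitney elementary bidiagonal factorization of nonsingular TN matrices combined with a criterion for when such a product is totally positive. Your proposal identifies neither ingredient, so the hard implication remains unproved.
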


The Gauss--Borel factorization of the  matrix $T^{[N]}$ in  \eqref{eq:Hessenberg_truncation} is the following factorization 
\begin{align}\label{eq:Gauss-Borel}
	T^{[N]}=L^{[N]} U^{[N]}
\end{align}
with banded triangular matrices given by 
\begin{align*}
	L^{[N]}&=\left[\begin{NiceMatrix}[columns-width = auto]
		1 &0&\Cdots&&&0\\
		\mathscr m_1& 1 &\Ddots&&&\Vdots\\
		\mathscr l_2 &\mathscr m_2& 1& &&\\
		0&\mathscr l_3&\mathscr m_3&1&&\\
		\Vdots& \Ddots& \Ddots & \Ddots&\Ddots&0\\
		0	&\Cdots&0&\mathscr l_N&\mathscr m_N&1
	\end{NiceMatrix}\right], & U^{[N]}& =
	\left[\begin{NiceMatrix}
		\alpha_1 & 1 &0&\Cdots&0\\
		0& \alpha_4 & \Ddots &\Ddots&\Vdots\\
		\Vdots&\Ddots&\alpha_7&&0\\
		& & \Ddots &\Ddots &1\\
		0 &\Cdots&&0&\alpha_{3N+1}
	\end{NiceMatrix}\right].
\end{align*}

\begin{pro}
	The Gauss--Borel factorization  exists if and only if all  leading  principal minors $\delta^{[N]}$ of $T^{[N]}$ are not zero.
	For $n\in\N$, the following expressions  for the coefficients hold
	\begin{align}\label{eq:gauss_borel_a_b_alpha}
		\mathscr l_{n+1}&=\frac{a_{n+1}\delta^{[n-2]}}{\delta^{[n-1]}},&
		\mathscr m_n&=c_n-\frac{\delta^{[n]}}{\delta^{[n-1]}},&
		\alpha_{3n-2}&=\frac{\delta^{[n-1]}}{\delta^{[n-2]}},
	\end{align}
	where $\delta^{[-1]}=1$ and $a_1=0$,  and we have the following recurrence relation for the determinants
	\begin{align}\label{eq:recurrence_delta}
		\delta^{[n]}=	a_n\delta^{[n-3]}-b_n\delta^{[n-2]}+c_n\delta^{[n-1]},
	\end{align}
	is satisfied.
	
\end{pro}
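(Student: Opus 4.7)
The plan is to proceed in three parts. First, I would establish existence and uniqueness of the Gauss--Borel factorization with the stated normalization ($L^{[N]}$ unit lower triangular) by the classical induction on $N$: assuming the factorization at size $N$ has been built, one extends $T^{[N-1]}=L^{[N-1]}U^{[N-1]}$ to a bordered factorization $T^{[N]}=L^{[N]}U^{[N]}$, in which the only new unknowns are the three scalars $\mathscr l_N,\mathscr m_N,\alpha_{3N+1}$. The bordering system is uniquely solvable at each step precisely when $\delta^{[N-1]}\ne 0$, since the new diagonal pivot of $U^{[N]}$ equals $\delta^{[N]}/\delta^{[N-1]}$.

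Second, I would derive the coefficient formulas by equating matching band entries of $L^{[N]}U^{[N]}$ with those of $T^{[N]}$. The sparsity of both factors (at most two subdiagonals in $L^{[N]}$, a single superdiagonal of ones in $U^{[N]}$) kills all but one contribution on each relevant band, giving
\begin{align*}
a_n&=\mathscr l_n\,\alpha_{3n-5}, & b_n&=\mathscr l_n+\mathscr m_n\,\alpha_{3n-2}, & c_n&=\mathscr m_n+\alpha_{3n+1}.
\end{align*}
Taking determinants in \eqref{eq:Gauss-Borel} at size $n$ yields $\delta^{[n-1]}=\prod_{k=1}^{n}\alpha_{3k-2}$, so that $\alpha_{3n-2}=\delta^{[n-1]}/\delta^{[n-2]}$ with the convention $\delta^{[-1]}=1$. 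Substituting into the first and third identities above reproduces the announced expressions for $\mathscr l_{n+1}$ and $\mathscr m_n$.

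Third, the recurrence \eqref{eq:recurrence_delta} follows from Laplace expansion of $\delta^{[n]}=\det T^{[n]}$ along the last row of \eqref{eq:Hessenberg_truncation}, whose only nonzero entries are $a_n,b_n,c_n$ at columns $n-2,n-1,n$. The cofactor at $(n,n)$ is immediately $\delta^{[n-1]}$. For the cofactor at $(n,n-1)$, after removing row $n$ and column $n-1$ there remains a single nonzero entry in the last column of the minor, namely the superdiagonal $1$ located at the original position $(n-1,n)$; expanding along that column collapses the minor to $\delta^{[n-2]}$, and the cofactor sign $(-1)^{n+(n-1)}=-1$ produces the required negative contribution. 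For the cofactor at $(n,n-2)$, two successive expansions along the superdiagonal ones that become isolated in the last two columns reduce the minor to $\delta^{[n-3]}$ with total sign $+1$. The main delicate point is the careful sign-bookkeeping in these two iterated cofactor expansions; once it is carried out, the three contributions assemble into \eqref{eq:recurrence_delta}.
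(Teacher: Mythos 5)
Your proposal is correct and follows essentially the same route as the paper's argument: you equate the banded entries of $L^{[N]}U^{[N]}$ with those of $T^{[N]}$ to get $a_n=\mathscr l_n\alpha_{3n-5}$, $b_n=\mathscr l_n+\mathscr m_n\alpha_{3n-2}$, $c_n=\mathscr m_n+\alpha_{3n+1}$, use $\delta^{[n]}=\alpha_1\alpha_4\cdots\alpha_{3n+1}$ to identify the pivots, and obtain \eqref{eq:recurrence_delta} by expanding $\delta^{[n]}$ along its last row, which is precisely the alternative derivation the paper itself points out (the paper first reads the recurrence off the $b_n$ consistency relation). Your bordering induction for the existence criterion is the standard argument that the paper leaves implicit, so there is no substantive difference in approach.
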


For oscillatory matrices the Gauss--Borel factorization exits and both triangular factors belong to InTN. See \cite{manas} for a modern account of the role of Gauss--Borel factorization problem in the realm of standard  and non standard orthogonality.


\section{Hypergeometric and Jacobi--Piñeiro examples}
Now we discuss two cases of tetradiagonal Hessenberg matrices that appear as recurrence matrices of two families of  multiple orthogonal polynomials. For each of them we consider bidiagonal factorizations and its positivity.

\subsection{{Hypergeometric multiple orthogonal polynomials}}
In \cite{Lima-Loureiro} a new set of multiple hypergeometric polynomials were introduced by Lima and Loureiro. 
The corresponding recursion matrix $T_{LL}$ was used in \cite{nuestro1} to construct stochastic matrices and associated Markov chains beyond birth and death. For the hypergeometric case the $T_{LL}=L_1L_2U$ bidiagonal factorization is provided in \cite[Equations 107-110]{Lima-Loureiro}, that ensures the regular oscillatory character of the  matrix $T_{LL}$ for this hypergeometric case. Notice the correspondence between Lima--Loureiro's $\lambda_n$  and our $\alpha_n$ is $	\lambda_{3n+2}\to \alpha_{3n+1}$, $\lambda_{3n+1}\to \alpha_{3n}$ and $\lambda_{3n}\to \alpha_{3n-1}$.
These coefficients were gotten in \cite{Lima-Loureiro} from \cite[Theorem 14.5]{Petreolle-Sokal-Zu} as the coefficients of a branched-continued-fraction representation for $_{3}F_{2}$. For more on this see the recent paper \cite{Zu}. This sequence is TP, so that $T_{LL}$ is a regular oscillatory banded Hessenberg matrix.

\subsection{{Jacobi--Piñeiro multiple orthogonal polynomials}}\label{Section:JP}
Jacobi--Piñeiro multiple orthogonal polynomials, associated with weights $w_1=x^{\alpha}(1-x)^\gamma, w_2=x^{\beta}(1-x)^\gamma$ with support on $[0,1]$ $\alpha,\beta,\gamma>-1$, $\alpha-\beta\not\in\Z$, is a well study case.
This system is an AT system and the corresponding orthogonal polynomials and linear forms interlace its zeros, see \cite{Ismail},  even though, as we will discuss now, the recursion matrix  $T_{JP}$ is not oscillatory.
The corresponding  monic recursion matrix $T_{JP}$  was considered in \cite[Section 4.3]{nuestro1} and we show that this recursion matrix was a positive matrix whenever the parameters $\alpha,\beta $ lay in the strip given by $|\alpha-\beta|<1$. 

\begin{lemma}[Jacobi--Piñeiro's recursion matrix bidiagonal factorization]
	The Jacobi--Piñeiro's recursion matrix has bidiagonal factorizations as in Equation \eqref{eq:bidiagonal}  with at least the following two set of parameters:
	{\scriptsize\begin{align*}
					&\begin{aligned}
							\alpha_{6n+1} &= \frac{( n + 1+\alpha ) (2 n + 1 + \alpha + \gamma) (2 n + 1 + \beta + \gamma)}{(3 n + 1 + \alpha + \gamma) (3 n + 2 + \alpha + \gamma) (3 n + 1 + \beta + \gamma)},\\	
							\alpha_{6n+2}  &= \frac{n (2 n + 1 + \gamma) (2 n + 1 + \alpha +  \gamma)}{(3 n + 2 + \alpha + \gamma) (3 n + 1 + \beta + \gamma) (3 n + 2 + \beta + \gamma)},\\	
							\alpha_{6n+3}&= \frac{(n + 1) (2 n + 1 + \gamma) (2 n + 2 + \beta + \gamma)}{(3 n + 2 + \alpha + \gamma) (3 n + 3 + \alpha +\gamma) (3 n + 2 + \beta + \gamma)},\\	
							\alpha_{6n+4} &=\frac {( n + 1+\beta) (2 n + 2 + \alpha + \gamma) (2 n + 2 + \beta + \gamma)}{(3 n + 3 + \alpha + \gamma) (3 n + 2 + \beta + \gamma) (3 n + 3 + \beta + \gamma)},\\	
							\alpha_{6n+5}&= \frac{(n + 1+\alpha-\beta ) (2 n + 2 + \gamma) (2 n + 2 + \alpha + \gamma)}{(3 n + 3 + \alpha + \gamma) (3 n + 4 + \alpha + \gamma) (3 n + 3 + \beta + \gamma)},\\
							\alpha_{6n+6}&= \frac{(n + 1-\alpha+\beta  ) (2 n + 2 +\gamma) (2 n + 3 + \beta + \gamma)}{(3 n + 4 + \alpha + \gamma) (3 n + 3 + \beta + \gamma) (3 n + 4 + \beta + \gamma)},
						\end{aligned}
					&&\begin{aligned}
							\tilde \alpha_{6n+1} &= \frac{( n + 1+\alpha ) (2 n + 1 + \alpha + \gamma) (2 n + 1 + \beta + \gamma)}{(3 n + 1 + \alpha + \gamma) (3 n + 2 + \alpha + \gamma) (3 n + 1 + \beta + \gamma)},\\	
							\tilde	\alpha_{6n+2}  &= \frac{(n-\alpha+\beta) (2 n + 1 + \gamma) (2 n + 1 + \beta +  \gamma)}{(3 n + 2 + \alpha + \gamma) (3 n + 1 + \beta + \gamma) (3 n + 2 + \beta + \gamma)},\\	
							\tilde	\alpha_{6n+3}&= \frac{(n + 1+\alpha-\beta) (2 n + 1 + \gamma) (2 n + 2 + \alpha + \gamma)}{(3 n + 2 + \alpha + \gamma) (3 n + 3 + \alpha +\gamma) (3 n + 2 + \beta + \gamma)},\\	
							\tilde	\alpha_{6n+4} &=\frac {( n + 1+\beta ) (2 n + 2 + \alpha + \gamma) (2 n + 2 + \beta + \gamma)}{(3 n + 3 + \alpha + \gamma) (3 n + 2 + \beta + \gamma) (3 n + 3 + \beta + \gamma)},\\	
							\tilde	\alpha_{6n+5}&= \frac{( n + 1) (2 n + 2 + \gamma) (2 n + 2 + \beta + \gamma)}{(3 n + 3 + \alpha + \gamma) (3 n + 4 + \alpha + \gamma) (3 n + 3 + \beta + \gamma)},\\
							\tilde	\alpha_{6n+6}&= \frac{( n + 1) (2 n + 2 +\gamma) (2 n + 3 + \alpha + \gamma)}{(3 n + 4 + \alpha + \gamma) (3 n + 3 + \beta + \gamma) (3 n + 4 + \beta + \gamma)},
						\end{aligned}
			\end{align*}}
	here  $n\in\N_0$.
\end{lemma}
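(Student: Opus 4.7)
The plan is a direct computational verification: I would reduce the asserted factorization $T_{JP}=L_1L_2U$ to a short list of polynomial identities between the tabulated $\alpha_j$'s and the known Jacobi--Piñeiro recursion coefficients, then check the identities case by case.

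First, I would carry out the symbolic multiplication of the three bidiagonal factors in \eqref{eq:LU_J_oscillatory_factors}. Matching $L_1L_2U$ against the tetradiagonal pattern \eqref{eq:monic_Hessenberg} entry by entry yields, for every row index $n$,
\begin{align*}
c_n &= \alpha_{3n-1}+\alpha_{3n}+\alpha_{3n+1},\\
b_n &= \alpha_{3n-3}\alpha_{3n-1}+\alpha_{3n-2}\bigl(\alpha_{3n-1}+\alpha_{3n}\bigr),\\
a_n &= \alpha_{3n-5}\alpha_{3n-3}\alpha_{3n-1},
\end{align*}
with the convention $\alpha_j=0$ for $j\leq 0$ (so $c_0=\alpha_1$ and $b_1=\alpha_1(\alpha_2+\alpha_3)$). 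Thus the lemma is equivalent to verifying these three identities with the Jacobi--Piñeiro coefficients on the left and the tabulated rational expressions in $(n,\alpha,\beta,\gamma)$ on the right.

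Next, I would import the explicit monic Jacobi--Piñeiro recursion coefficients $a_n$, $b_n$, $c_n$ from \cite[Section 4.3]{nuestro1} (see also \cite{Ismail}). The $(2,1)$-step staircase of multi-indices induces a period-two dependence of these coefficients on $n$, which matches exactly the period-six organization of the tabulated $\alpha_{6n+k}$, $k=1,\ldots,6$. For each of the two proposed factorizations and each residue of $n$ modulo $2$, substitution of the tabulated $\alpha_j$ into the three identities and clearing of denominators produces, in every subcase, a polynomial equality in $(n,\alpha,\beta,\gamma)$ of low degree that can be checked by direct expansion. Altogether twelve routine subcases need to be verified.

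Finally, a dimension count on the three identities shows that the factorization is not rigid: $\alpha_1=c_0$ and $\alpha_4=c_1-b_1/c_0$ are forced and the value $\alpha_2+\alpha_3=b_1/c_0$ is prescribed, but the split between $\alpha_2$ and $\alpha_3$ is free, and this single degree of freedom propagates through all later rows by way of the product constraint $\alpha_{3n-5}\alpha_{3n-3}\alpha_{3n-1}=a_n$. The two tabulated families are two natural rational parametrizations of this one-parameter ambiguity; consistently, $\alpha_{6n+1}$ and $\alpha_{6n+4}$ must and do coincide in both tables, while only the pairs $(\alpha_{6n+2},\alpha_{6n+3})$ and $(\alpha_{6n+5},\alpha_{6n+6})$ differ. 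The main obstacle is the bulk of the algebra rather than any conceptual difficulty; the most delicate check is the additive identity for $c_n$, which requires three products of Pochhammer-type linear factors to telescope into a single simple rational expression, whereas the identities for $b_n$ and $a_n$ are more transparent because their right-hand sides are already multiplicative.
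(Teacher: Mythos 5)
Your proposal is correct: expanding $L_1L_2U$ entrywise does give exactly the identities $c_n=\alpha_{3n-1}+\alpha_{3n}+\alpha_{3n+1}$, $b_n=\alpha_{3n}\alpha_{3n-2}+\alpha_{3n-1}\alpha_{3n-2}+\alpha_{3n-1}\alpha_{3n-3}$, $a_n=\alpha_{3n-1}\alpha_{3n-3}\alpha_{3n-5}$ (with $\alpha_j=0$ for $j\leqslant 0$), and checking these against the known monic Jacobi--Piñeiro coefficients, split by the parity of the row index, is a legitimate finite verification of both tables; your closing observation that $\alpha_1,\alpha_4,\dots$ are forced while the split of $b_1/c_0$ into $\alpha_2+\alpha_3$ is a free parameter is also accurate. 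The paper, however, organizes the argument differently: it goes through the intermediate Gauss--Borel factorization $T=LU$, first determining $\alpha_1$, $\mathscr m_1$, $\alpha_4$ and then $\mathscr l_n$, $\mathscr m_n$, $\alpha_{3(n-1)+1}$ recursively from $\mathscr l_n\alpha_{3(n-2)+1}=a_n$, $\mathscr l_n+\mathscr m_n\alpha_{3(n-1)+1}=b_n$, $\mathscr m_n+\alpha_{3n+1}=c_n$, and afterwards resolves the splitting $L=L_1L_2$ via $\alpha_{3n-1}+\alpha_{3n}=\mathscr m_n$, $\alpha_{3n+2}\alpha_{3n}=\mathscr l_{n+1}$, starting from a chosen $\alpha_2$. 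That two-stage derivation is algebraically equivalent to your three identities (composing the two stages recovers them), but it \emph{generates} the parameters rather than verifying given tables, it makes the one-parameter freedom and hence the existence of the two families structurally transparent, and it explains a priori why the $U$-diagonal entries $\alpha_{6n+1},\alpha_{6n+4}$ must agree in both tables (they are the ratios $\delta^{[m]}/\delta^{[m-1]}$ fixed by the $LU$ step, cf. \eqref{eq:gauss_borel_a_b_alpha}), a fact you obtain only a posteriori. Your route buys a self-contained check of the displayed formulas with no intermediate quantities; the paper's buys the mechanism behind them. One small imprecision on your side: the free parameter propagates through the sum constraints $\alpha_{3n-1}+\alpha_{3n}=\mathscr m_n$ as much as through the product constraint on the $a_n$, so the propagation statement should cite both; and, like the paper, you leave the twelve rational-function identities to routine expansion, which is acceptable here since neither proof displays that algebra.
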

\begin{proof}
We have $ \alpha_1 = c_0$ and  $\mathscr m_1$ and  $\alpha_4$ are gotten from
	$\mathscr m_1 \alpha_1 = b_1$ and 	$\mathscr m_1 + \alpha_4 = c_1$.
	Then,  $\mathscr l_n$, $\mathscr m_n$, $\alpha_{3(n-1)+1}$, $n = 2,3,\dots$, are determined recursively according to 
	$\mathscr l_n \alpha_{3(n-2)+1} = a_{n}$,
	$\mathscr l_n + \mathscr m_n \alpha_{3(n-1)+1} = b_n$
and
	$ m_n + \alpha_{3n+1} = c_n$
	(from the first relation we get  $\mathscr l_n$, from the second $m_n$  and from the third $\alpha_{3(n-1)+1}$).
	Now, these expressions for $\mathscr l$’s  and $\mathscr m$’s lead to the remaining $\alpha$’s.
Indeed, we have 
	$\alpha_2 + \alpha_3 = \mathscr m_1$ and	$ \alpha_5 \alpha_3 = \mathscr l_2$
	(we get $\alpha_3$ and  $\alpha_5$, respectively)
	and then we apply the recursion, $n \in\N$,
	$\alpha_{3n+2}+ \alpha_{3(n+1)} = \mathscr m_{n+1}$, 
	$\alpha_{3(n+1)+2}  \alpha_{3(n+1)} = \mathscr l_{n+2}$
	(in each iteration we obtain $\alpha_{3(n+1)}$ and  $\alpha_{3(n+1)+2}$, respectively).
\end{proof}
\begin{rem}
	The bidiagonal factorization $\{\tilde \alpha_n\}_{n=1}^\infty$ was found in \cite[Section 8.1]{Aptekarev_Kaliaguine_VanIseghem}, that is why we refer to it as the Aptekarev-Kalyagin-Van Iseghem (AKV) bidiagonal factorization.
\end{rem} 

For $n\in\N_0$,  given these two bidiagonal factorizations, the entries of the  corresponding lower unitriangular factor  $L=L_1L_2=\tilde L_1\tilde L_2$ of the lower factor $L$ in the Gauss--Borel factorization of the Jacobi--Piñeiro's Hessenberg transition matrix, can be expressed in the following two manners
\begin{align}\label{eq:lm_dos}
	\left\{\begin{aligned}
			\mathscr m_{2n+1}&=\alpha_{6n+2}+\alpha_{6n+3}=\tilde \alpha_{6n+2}+\tilde\alpha_{6n+3},&	\mathscr m_{2n+2}&=\alpha_{6n+5}+\alpha_{6n+6}=\tilde\alpha_{6n+5}+\tilde\alpha_{6n+6}, \\
			\mathscr l_{2n+2}&=\alpha_{6n+5}\alpha_{6n+3}=\tilde\alpha_{6n+5}\tilde\alpha_{6n+3},&	\mathscr l_{2n+3}&=\alpha_{6n+8}\alpha_{6n+6}=\tilde\alpha_{6n+8}\tilde\alpha_{6n+6}.
		\end{aligned}\right.
\end{align}

To better understand the dependence on the set of Jacobi--Piñeiro's parameters $(\alpha,\beta)$ we define some regions in the plane. 
Let us denote by $\mathscr R\coloneq\{(\alpha,\beta)\in\R^2, \alpha,\beta>-1, \alpha-\beta\not\in\Z \}$, that we call the natural region --where the orthogonality is well defined,  and divide it in the following  four regions:
\begin{align*}
	\mathscr R_1&\coloneq\{(\alpha,\beta )\in\mathscr R: \alpha-\beta>1\}, &	\mathscr R_2&\coloneq\{(\alpha,\beta )\in\mathscr R: 0<\alpha-\beta<1\}, \\ \mathscr R_3&\coloneq\{(\alpha,\beta )\in\mathscr R: -1<\alpha-\beta<0\}, &
	\mathscr R_4&\coloneq\{(\alpha,\beta )\in\mathscr R:  \alpha-\beta<-1\}.
\end{align*}
We show these regions in the following figure
\begin{center}
	\begin{tikzpicture}[arrowmark/.style 2 args={decoration={markings,mark=at position #1 with \arrow{#2}}},scale=1]
			\begin{axis}[axis lines=middle,axis equal,grid=both,xmin=-1, xmax=3,ymin=-1.5, ymax=3.5,
					xticklabel,yticklabel,disabledatascaling,xlabel=$\beta$,ylabel=$\alpha$,every axis x label/.style={
							at={(ticklabel* cs:1)},
							anchor=south west,
						},
					every axis y label/.style={
							at={(ticklabel* cs:1.0)},
							anchor=south west,
						},grid style={line width=.1pt, draw=Bittersweet!10},
					major grid style={line width=.2pt,draw=Bittersweet!50},
					minor tick num=4,
					enlargelimits={abs=0.09},
					axis line style={latex'-latex'},Bittersweet]
					
					\draw [fill=DarkSlateBlue!20,opacity=.2,dashed,thick] (-1,4)--(-1,-1)--(5,-1)--(5,4)--(-1,4) ;
					\draw [fill=DarkSlateBlue!30,opacity=.5,dashed,thick] (-1,-1)--(-1,0)--(3,4) node[above, Black,sloped, pos=0.5] {$\alpha=\beta+1$}--(5,5)--(5,4) --(0,-1) node[below, Black,sloped, pos=0.6] {$\alpha=\beta-1$}--(-1,-1);
					\draw [
					pattern=north west lines, pattern color=DarkSlateBlue!50,opacity=.6,dashed] (-1,-1)--(0,-1)--(5,4)--(5,5)--(-1,-1) ;
					\draw [fill=DarkSlateBlue!30,opacity=.5,dashed,thick] (-1,-1) -- (4,4 )node[below, Black,sloped, pos=0.4] {$\alpha=\beta$};
					\draw[thick,black] (axis cs:-1,0) circle[radius=2pt,opacity=0.2,fill]node[left,above ] {$-1$} ;
					\draw[thick,black] (axis cs:0,-1)circle[radius=2pt,opacity=0.2,fill] node[right,below ] {$-1$} ;
					\node[anchor = north east,Bittersweet] at (axis cs: 4.1,2.7) {$\mathscr R$} ;
					\node[anchor = north east,Bittersweet] at (axis cs: -0.3,2.5) {$\mathscr R_1$} ;
					\node[anchor = north east,Bittersweet] at (axis cs: 0.1,.55) {$\mathscr R_2$} ;
					\node[anchor = north east,Bittersweet] at (axis cs: 0.7,0.05) {$\mathscr R_3$} ;
					\node[anchor = north east,Bittersweet] at (axis cs: 3,-.5) {$\mathscr R_4$} ;
				\end{axis}
		\end{tikzpicture}
\end{center}

\begin{lemma}\label{lemma:Regions_JP}
	\begin{enumerate}
			\item For the sequence $\{\alpha_n\}_{n\in\N}$ of the first bidiagonal factorization, we have
			\begin{enumerate}
					\item In the region $\mathscr R$  the sequence in TN but for $\alpha_5$ that is negative in $\mathscr R_4$ and $\alpha_6$ that is negative in $\mathscr R_1$.
					\item Is a TN sequence in the strip $\mathscr R_2\cup\mathscr R_3$. Excluding $\alpha_2=0$, the sequence is TP.
				\end{enumerate}
			\item For the AKV sequence $\{\tilde \alpha_n\}_{n\in\N}$, we have
			\begin{enumerate}
					\item In the region $\mathscr R$  the sequence in TP but for $\tilde \alpha_2$ that is negative in $\mathscr R_1\cup\mathscr R_2$,  $\tilde\alpha_8$ that is negative in $\mathscr R_1$ and $\tilde\alpha_3$ that is negative in $\mathscr R_4$.
					\item Is a TP sequence in the half strip $\mathscr R_3$. 
				\end{enumerate}
			
		\end{enumerate}
\end{lemma}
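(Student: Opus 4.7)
My plan is to prove the lemma by direct sign inspection of the closed-form expressions for $\alpha_{6n+k}$ and $\tilde\alpha_{6n+k}$, $k\in\{1,\dots,6\}$, $n\in\N_0$, recorded in the statement. The argument splits cleanly into three conceptual steps that I would carry out in turn.

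First I would confirm that every denominator is strictly positive on the natural region $\mathscr R$. Each denominator is a product of factors of the shape $3n+j+\alpha+\gamma$ or $3n+j+\beta+\gamma$ with $j\in\{1,2,3,4\}$ and $n\in\N_0$, and positivity of these follows from the standing hypotheses on the Jacobi--Piñeiro parameters (which force $\alpha+\gamma,\beta+\gamma>-1$, as is also needed for the weights to be integrable). Consequently the sign of each $\alpha$ (respectively $\tilde\alpha$) coincides with the sign of its numerator.

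Second, I would catalogue the numerators residue class by residue class. For the first factorization the numerators of $\alpha_{6n+1}$, $\alpha_{6n+3}$ and $\alpha_{6n+4}$ only involve factors of the form $(n+c)$, $(n+c+\alpha)$, $(n+c+\beta)$, $(2n+c+\gamma)$, $(2n+c+\alpha+\gamma)$ or $(2n+c+\beta+\gamma)$, all manifestly positive on $\mathscr R$; the factor $n$ in the numerator of $\alpha_{6n+2}$ accounts for the single vanishing $\alpha_2=0$ and strict positivity for $n\geq 1$; and the only $(\alpha-\beta)$-sensitive factors appear as $(n+1+\alpha-\beta)$ in $\alpha_{6n+5}$ and $(n+1-\alpha+\beta)$ in $\alpha_{6n+6}$. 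Specialising to $n=0$ shows that $\alpha_5$ is negative precisely on $\mathscr R_4$ and $\alpha_6$ precisely on $\mathscr R_1$, which is item~(i.a). For item~(i.b), the constraint $|\alpha-\beta|<1$ on the strip $\mathscr R_2\cup\mathscr R_3$ forces $n+1\pm(\alpha-\beta)>n\geq 0$ for every $n$, so the entire sequence is strictly positive apart from $\alpha_2=0$.

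Third, I would apply the same template to the AKV sequence. There the $(\alpha-\beta)$-sensitive numerator factors are $(n-\alpha+\beta)$ in $\tilde\alpha_{6n+2}$ and $(n+1+\alpha-\beta)$ in $\tilde\alpha_{6n+3}$, while the other four residue classes have manifestly positive numerators. The cases $n=0$ and $n=1$ isolate the exceptional entries $\tilde\alpha_2$ (negative when $\beta-\alpha<0$, i.e.\ on $\mathscr R_1\cup\mathscr R_2$), $\tilde\alpha_8$ (negative when $1+\beta-\alpha<0$, i.e.\ on $\mathscr R_1$) and $\tilde\alpha_3$ (negative when $1+\alpha-\beta<0$, i.e.\ on $\mathscr R_4$), giving item~(ii.a). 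On the half strip $\mathscr R_3$ the inequality $-1<\alpha-\beta<0$ renders both offending factors positive for every $n\in\N_0$, yielding~(ii.b). The main obstacle will be organisational rather than conceptual: one must check carefully that inside each region declared to enjoy total positivity no higher-indexed coefficient silently changes sign (equivalently that the two $(\alpha-\beta)$-sensitive factors isolated above really are the only sources of sign change within the stated ranges), and one must align the \emph{TN}/\emph{TP} qualifiers for the coefficient sequence with the PBF language fixed in the introduction.
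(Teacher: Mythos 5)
Your overall strategy --- direct sign inspection of the closed-form coefficients, reducing everything to the signs of the $(\alpha-\beta)$-sensitive factors $n+1\pm(\alpha-\beta)$ and $n-\alpha+\beta$ and then specialising to $n=0$ (and $n=1$ for $\tilde\alpha_8$) --- is exactly the paper's proof, which is the same ``direct inspection'' argument stated more tersely; the exceptional entries you isolate ($\alpha_2=0$, $\alpha_5$, $\alpha_6$, $\tilde\alpha_2$, $\tilde\alpha_3$, $\tilde\alpha_8$) and the regions on which they change sign agree with the paper, as does your observation that on the strips $|\alpha-\beta|<1$, resp.\ $-1<\alpha-\beta<0$, these factors stay positive for every $n$, so no higher-indexed coefficient changes sign there.

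There is, however, one incorrect supporting claim in your first step. Integrability of the Jacobi--Pi\~neiro weights only requires $\alpha,\beta,\gamma>-1$; it does \emph{not} force $\alpha+\gamma>-1$ or $\beta+\gamma>-1$, so on part of $\mathscr R$ (take $\alpha$ and $\gamma$ both close to $-1$) the factor $1+\alpha+\gamma$ appearing in the denominator of $\alpha_1=\tilde\alpha_1$, and likewise $1+\beta+\gamma$ in the denominators of $\alpha_1,\alpha_2,\tilde\alpha_1,\tilde\alpha_2$, is negative. Your blanket assertions that every denominator, and every numerator factor of the form $2n+1+\alpha+\gamma$ or $2n+1+\beta+\gamma$, is positive on $\mathscr R$ therefore fail in that corner of parameter space. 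The gap is easily repaired: every other denominator factor has the form $k+\alpha+\gamma$ or $k+\beta+\gamma$ with $k\geqslant 2$ and is positive because $\alpha+\gamma,\beta+\gamma>-2$, while the offset-one factors occur only at $n=0$, where they cancel identically against the same factors in the numerator of $\alpha_1=\tilde\alpha_1$ (leaving $\alpha_1=(1+\alpha)/(2+\alpha+\gamma)>0$), are multiplied by the vanishing factor $n$ in $\alpha_2$, and cancel in $\tilde\alpha_2$ (leaving the sign of $\beta-\alpha$). With that amendment your sign bookkeeping is complete and the argument coincides with the one in the paper.
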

\begin{proof}
	For the first set of bidiagonal parameters $\{\alpha_n\}_{n\in\N}$, we check that all are positive in $\mathscr R$, but for $\alpha_2=0$ and $\alpha_5,\alpha_6$. From direct inspection we get that $\alpha_5<0$ when $1+\alpha-\beta<0$, i.e. in region $\mathscr R_4$ and $\alpha_6<0$ when $1-\alpha+\beta<0$, i.e. in region $\mathscr R_1$. Hence, the sequence  $\{\alpha_n\}_{n=1}^\infty$ is a TN sequence, TP but for $\alpha_2=0$, in region $\mathscr{R}_2\cup\mathscr R_3$, is TN in $\mathscr R$  but for $\alpha_5$ in $\mathscr R_4$ and TN in $\mathscr R$  but for $\alpha_6$ in $\mathscr R_1$. For theAKV  parameters $\{\tilde \alpha_n\}_{n\in\N}$, all are positive  in $\mathscr R$,  but for $\tilde\alpha_2,\tilde\alpha_8$ and $\tilde\alpha_3$. The entry $\tilde \alpha_2<0$ when $\alpha>\beta$, that is in $\mathscr R_1\cup \mathscr R_2$, $\tilde \alpha_8<0$ when $1-\alpha+\beta<0$ i.e. in $\mathscr R_1$ and $\tilde\alpha_3$ when $1+\alpha-\beta<0$, i.e. in $\mathscr R_4$. 
\end{proof}

\begin{lemma}
	For the two first subdiagonals of the lower triangular matrix $L$ in the Gauss--Borel factorization of the  Jacobi--Piñeiro's Hessenberg recursion matrix  $T_{JP}$ we have
	\begin{enumerate}
			\item The sequence $\{\mathscr m_n\}_{n=1}^\infty$ is TP in the definition region  $\mathscr R$. 
			\item The sequence $\{\mathscr l_n\}_{n=1}^\infty$  is TP but for  $\mathscr l_2$ ($\mathscr l_3$),  that is negative in $\mathscr R_4$ ($\mathscr R_1$).
		\end{enumerate}
\end{lemma}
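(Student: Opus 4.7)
The plan is to read off the signs of $\mathscr m_n$ and $\mathscr l_n$ from the two bidiagonal representations recorded in equation~\eqref{eq:lm_dos}, switching between the original factorization and the AKV one according to which one avoids the exceptional entries of Lemma~\ref{lemma:Regions_JP} in a given subregion of $\mathscr R$. Since each $\mathscr m_n$ is a sum of two $\alpha$'s (or two $\tilde\alpha$'s) and each $\mathscr l_n$ a product of two, positivity reduces to a finite case analysis of which indices $\alpha_{6n+j}$ can fail to be positive.

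For $\{\mathscr m_n\}$ I would argue by parity. For the odd-indexed entries $\mathscr m_{2n+1}=\alpha_{6n+2}+\alpha_{6n+3}$, the indices involved avoid the exceptional set $\{\alpha_2,\alpha_5,\alpha_6\}$ of Lemma~\ref{lemma:Regions_JP}(1)(a) when $n\geq 1$, giving positivity directly in $\mathscr R$, and for $n=0$ the vanishing summand $\alpha_2=0$ is compensated by $\alpha_3>0$. For the even-indexed entries $\mathscr m_{2n+2}=\alpha_{6n+5}+\alpha_{6n+6}$, the case $n\geq 1$ is again immediate, but $\mathscr m_2=\alpha_5+\alpha_6$ has summands with opposite problematic signs on $\mathscr R_1$ and $\mathscr R_4$; here I would switch to $\mathscr m_2=\tilde\alpha_5+\tilde\alpha_6$, whose summands lie outside the AKV exceptional set $\{\tilde\alpha_2,\tilde\alpha_3,\tilde\alpha_8\}$ and so are positive throughout $\mathscr R$.

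For $\{\mathscr l_n\}$ the same splitting applies. The products $\mathscr l_{2n+2}=\alpha_{6n+5}\alpha_{6n+3}$ and $\mathscr l_{2n+3}=\alpha_{6n+8}\alpha_{6n+6}$ involve only non-exceptional indices when $n\geq 1$ and are therefore strictly positive on $\mathscr R$. Only the low-index cases require individual inspection: since $\alpha_3>0$ on $\mathscr R$, the product $\mathscr l_2=\alpha_5\alpha_3$ inherits the sign of $\alpha_5$ (positive on $\mathscr R\setminus\mathscr R_4$, negative on $\mathscr R_4$), and similarly $\mathscr l_3=\alpha_8\alpha_6$ inherits the sign of $\alpha_6$ (positive on $\mathscr R\setminus\mathscr R_1$, negative on $\mathscr R_1$). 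A cross-check through the AKV product yields the same sign pattern via $\tilde\alpha_3$ and $\tilde\alpha_8$. The only real subtlety is that no single factorization is positive on all of $\mathscr R$, but equation~\eqref{eq:lm_dos} compensates by supplying two representations whose exceptional sets do not overlap on the regions of interest.
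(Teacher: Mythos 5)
Your proposal is correct and follows essentially the same route as the paper: read the signs of $\mathscr m_n$ and $\mathscr l_n$ from \eqref{eq:lm_dos} using the first bidiagonal factorization together with Lemma \ref{lemma:Regions_JP}, which settles every entry except $\mathscr m_2=\alpha_5+\alpha_6$, and then resolve that single ambiguous entry by switching to the AKV representation $\mathscr m_2=\tilde\alpha_5+\tilde\alpha_6>0$ (the paper's proof writes $\mathscr m_5$ there, but it is plainly the same entry $\alpha_5+\alpha_6$ you treat). Your extra details (the parity split, noting $\mathscr m_1=\alpha_2+\alpha_3>0$ despite $\alpha_2=0$, and the AKV cross-check of the signs of $\mathscr l_2,\mathscr l_3$) only make explicit what the paper leaves implicit.
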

\begin{proof}
	From \eqref{eq:lm_dos} and the first bidiagonal factorization we get that the lower triangular $L$ has all its entries in the two first subdiagonals TP but for  $\mathscr l_2$ ($\mathscr l_3$),  that is negative in $\mathscr R_4$ ($\mathscr R_1$), and maybe $\mathscr m_5=\alpha_5+\alpha_6$. Looking now at the AKV factorization we see that $\mathscr m_5=\tilde \alpha_5+\tilde\alpha_6>0$.
\end{proof}

\begin{pro}
	The Jacobi--Piñeiro's  recursion matrix satisfies:
	\begin{enumerate}
			\item 	 Is oscillatory if and only if the parameters $(\alpha,\beta)$ belong to the  strip $\mathscr R_2\cup\mathscr R_3$.
			\item    Admits a positive bidiagonal factorization at least if the parameters belong to the lower half strip $\mathscr R_3$.
			\item The retraction of the complementary matrix $T_{JP}^{(4)}$ ( $T_{JP}^{(5)}$), described in \cite[Theorem 11]{previo2}
			 is   oscillatory in the region $\mathscr R_2\cup \mathscr R_3\cup\mathscr R_4$ ($\mathscr R$).
		\end{enumerate}
\end{pro}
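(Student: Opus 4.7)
Part~(i) splits into two implications. For the ``only if'' direction, recall that every totally non-negative matrix has non-negative entries; consequently it suffices to exhibit a negative entry of $T_{JP}$ outside $\mathscr R_2\cup\mathscr R_3$. Inspection of the explicit Jacobi--Piñeiro recurrence coefficients discussed in Section~\ref{Section:JP} shows that one of the entries $a_n$ or $b_n$ changes sign as one crosses the lines $|\alpha-\beta|=1$, so $T_{JP}$ fails to be TN on $\mathscr R_1\cup\mathscr R_4$. For the converse, I would use the first bidiagonal factorization $T_{JP}=L_1L_2U$ from the preceding lemma. By Lemma~\ref{lemma:Regions_JP}(1)(b), in $\mathscr R_2\cup\mathscr R_3$ every coefficient $\alpha_n$ is non-negative (with $\alpha_2=0$ the only vanishing one), hence each of $L_1$, $L_2$, $U$ is TN, and by the Cauchy--Binet theorem so is their product $T_{JP}$. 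Nonsingularity is immediate because $U$ has a strictly positive diagonal. The superdiagonal of $T_{JP}$ is identically~$1$, and the subdiagonal entries $b_n$ are positive throughout the strip, so the Gantmacher--Krein criterion (Theorem~\ref{teo:Gantmacher--Krein Criterion}) yields oscillatoriness.

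Part~(ii) is a direct application of Lemma~\ref{lemma:Regions_JP}(2)(b): on the half-strip $\mathscr R_3$ the AKV sequence $\{\tilde\alpha_n\}_{n\in\N}$ is totally positive, so in particular $\tilde\alpha_n>0$ for every $n$, and the AKV factorization $T_{JP}=\tilde L_1\tilde L_2\tilde U$ is by definition a PBF of $T_{JP}$ on~$\mathscr R_3$.

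For part~(iii), the plan is to invoke the complementary-matrix construction of \cite[Theorem 11]{previo2}, which expresses each retracted matrix $T_{JP}^{(k)}$ as an explicit bidiagonal product built from a shifted subset of the coefficients $\alpha_n$, by-passing the ``exceptional'' index determined by~$k$. From this explicit form one then repeats the argument of part~(i): use Lemma~\ref{lemma:Regions_JP} to check non-negativity of the $\alpha_n$'s that do appear in each retraction, verify by direct computation that the first sub- and superdiagonals of the resulting matrix are strictly positive, and apply Gantmacher--Krein. The prescribed regions $\mathscr R_2\cup\mathscr R_3\cup\mathscr R_4$ for $T_{JP}^{(4)}$ and $\mathscr R$ for $T_{JP}^{(5)}$ should then emerge because the complementary construction for $T_{JP}^{(4)}$ eliminates $\alpha_5$ as an obstruction, leaving only $\alpha_6<0$ in $\mathscr R_1$ as the possible violation, while the construction for $T_{JP}^{(5)}$ by-passes both $\alpha_5$ and $\alpha_6$ in the relevant factors.

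I expect the main difficulty to be part~(iii): it requires a careful bookkeeping of which $\alpha$-indices actually survive in each complementary construction from \cite{previo2}, matched against the negativity windows $\mathscr R_1$ (where $\alpha_6<0$) and $\mathscr R_4$ (where $\alpha_5<0$) identified in Lemma~\ref{lemma:Regions_JP}. By contrast, parts~(i) and~(ii) are essentially immediate consequences of the two bidiagonal factorizations combined with the Gantmacher--Krein criterion.
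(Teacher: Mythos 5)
Your proposal takes essentially the same route as the paper: part (i) is precisely how the paper's appeal to Lemma \ref{lemma:Regions_JP} is meant to be unpacked (a negative entry such as $a_2=\alpha_5\alpha_3\alpha_1$ in $\mathscr R_4$ or $a_3=\alpha_8\alpha_6\alpha_4$ in $\mathscr R_1$ rules out total nonnegativity, while TN bidiagonal factors, Cauchy--Binet and the Gantmacher--Krein criterion give oscillatoriness in the strip), part (ii) coincides with the paper's one-line use of the TP AKV sequence on $\mathscr R_3$, and part (iii) defers, exactly as the paper's proof does, to the construction of \cite[Theorem 11]{previo2}. The only difference is that you spell out the entrywise sign check and the index bookkeeping that the paper leaves implicit, so there is no substantive divergence.
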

\begin{proof}
	\begin{enumerate}
			\item It follows from  Lemma \ref{lemma:Regions_JP}.
			\item The AKV bidiagonal factorization sequence is TP in $\mathscr R_3$.
			\item We use the Gauss--Borel factorization of these retractions described in \cite[Theorem 11]{previo2}, that we know have a bidiagonal factorization with TP sequences.
		\end{enumerate}
\end{proof}

\begin{rem}
	From the previous discussion of  the  Jacobi--Piñeiro's recursion matrix it becomes clear that demanding the matrix to  have a positive bidiagonal factorization  is sufficient but not necessary to have spectral measures. In the natural region $\mathscr R$ the Jacobi--Piñeiro's weights exist, are positive with support on $[0,1]$.  However, we know that it is an oscillatory matrix only   in the strip $\mathscr R_2\cup\mathscr R_3$. The associated matrix that is regular oscillatory in the natural region $\mathscr R$ is the retracted complementary matrix $T_{JP}^{(5)}$. This observation leads to the question: Is it enough to have a spectral Favard theorem and positive measures that a retracted complementary matrix of the banded Hessenberg matrix is oscillatory?
\end{rem}

\section{Applications to Darboux  transformations}\label{S:Darboux}

\subsection{Darboux transformations of  oscillatory banded Hessenberg matrices}
We now show how our construction connects with those of the seminal paper \cite{Aptekarev_Kaliaguine_VanIseghem} by Aptekarev, Kalyagin and Van Iseghem on genetic sums, vector convergents, Hermite--Padé approximants and and Stieltjes problems, and with the Darboux--Christoffel transformations discussed in \cite{bfm}.  We identify the Darboux transformations of the oscillatory banded Hessenberg matrices with Christoffel transformations of the spectral measures.
Recall that $\nu$ is the  initial condition  given in Definition \ref{def:typeI}.

\begin{defi}[Darboux transformed Hessenberg matrices]
	Given an oscillatory banded lower Hessenberg matrix $T$  and its bidiagonal factorization as in \eqref{eq:bidiagonal}, we consider the semi-infinite matrices
\begin{align*}
	\hat T&\coloneq L_2 U L_1, & \hat {\hat T} &\coloneq U L_1 L_2.
\end{align*}
We will refer to these matrices as the first and second Darboux transformations of the  banded Hessenberg matrix $T$.
\end{defi}
\begin{rem}\label{rem:hat T is not tilde T}
		These auxiliary matrices $\hat T^{[N]}$ are not the $m$-th leading principal submatrix  of the matrix $\hat T\coloneq L_2UL_1$. The difference is in the last diagonal entry.
	The entries of $\hat T$  are
	\begin{align}\label{eq:entries hat_T}
	&\left\{\begin{aligned}
	\hat c_n & = \alpha_{3n+2} + \alpha_{3n+1} + \alpha_{3n}  , \\ 
	\hat b_n & = \alpha_{3n}  \alpha_{3n-1}+ \alpha_{3n+1} \alpha_{3n-1}+ \alpha_{3n}\alpha_{3n-2} , \\
	\hat a_n & =\alpha_{3n}\alpha_{3n-2} \alpha_{3n-4} ,
	\end{aligned}\right.&
	\end{align}
	All the entries of the $(N+1)$-th leading principal submatrix   of $\,\hat T$ coincide with those of $\hat T^{[N]}$ but for the last diagonal entry,  as
	$(\hat T^{[N]})_{N+1,N+1}=\alpha_{3N}+\alpha_{3N+1}$ while $\hat c_{N+1}=\alpha_{3N}+\alpha_{3N+1}+\alpha_{3N+2}$.
	\end{rem}
\begin{rem} \begin{enumerate}
		\item From definition it is immediately checked that both Darboux  transformed Hessenberg matrices has the same banded structure as $T$.
\item	For
 coefficients of the second Darboux transform $\hat {\hat T} $ we have 
\begin{align*}
\left\{\begin{aligned}
		\hat {\hat c}_n& = \alpha_{3n+3} + \alpha_{3n+2} + \alpha_{3n+1}, \\
	\hat {\hat b}_n& = \alpha_{3n+1}  \alpha_{3n}+ \alpha_{3n+2} \alpha_{3n}+ \alpha_{3n+1}\alpha_{3n-1}  , \\
	\hat {\hat a}_n &=\alpha_{3n+1}\alpha_{3n-1} \alpha_{3n-3}.
\end{aligned}\right.
\end{align*}
\item If $\,T$ has a positive bidiagonal factorization, so that we can take $\alpha_2>0$, then the Darboux transforms $\hat T, \hat{\hat T}$ are   oscillatory.
	\end{enumerate}
\end{rem}
Associated with these Hessenberg matrices we introduce the vectors of polynomials
\begin{align}\label{eq:L1hatB}
	\hat{\hat B}&\coloneq UB, & \hat B&=L_2\hat{\hat B}\coloneq L_2 U B.
\end{align}
Notice that  $\hat B_n$ and $ \hat {\hat B}_n$ are monic with
$\deg \hat B_n=	\deg \hat {\hat B}_n= n+1$. 
\begin{lemma}
We have
	\begin{align}\label{eq:L1hatBbis}
		L_1 \hat B=xB.
	\end{align}
\end{lemma}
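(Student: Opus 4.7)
The plan is to reduce the identity $L_1\hat B=xB$ directly to the eigenvalue equation \eqref{eq:recurrence_J} via the bidiagonal factorization \eqref{eq:bidiagonal}. By Definition \ref{def:typeII} the type II recursion vector $B$ satisfies $TB=xB$, and by the factorization hypothesis $T=L_1L_2U$. Substituting the factorization into the eigenvalue equation yields $L_1(L_2UB)=xB$. Using the definition $\hat B\coloneq L_2UB$ from \eqref{eq:L1hatB}, the left-hand side becomes $L_1\hat B$, which is exactly \eqref{eq:L1hatBbis}.

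Thus the proof is just a one-line associativity argument: the only thing to check is that the definitions are consistent, namely that $L_2UB$ (as a formal matrix-vector product of semi-infinite banded matrices acting on a polynomial vector) is well-defined and that $L_1(L_2UB)=(L_1L_2U)B$. This is straightforward because $U$, $L_2$ and $L_1$ are all banded (in fact bidiagonal) matrices, so each application produces a locally finite sum at every row, and associativity of the product at each row index holds componentwise. I do not anticipate any genuine obstacle: this lemma is essentially a restatement of $TB=xB$ under the decomposition $T=L_1L_2U$, recording the partial product $\hat B=L_2UB$ for later use (e.g.\ to study the Darboux transformed matrices $\hat T$ and $\hat{\hat T}$).

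In summary, my write-up would consist of a single short display applying \eqref{eq:bidiagonal} inside $TB=xB$, followed by the identification of $L_2UB$ with $\hat B$ from \eqref{eq:L1hatB}.
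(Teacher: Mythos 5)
Your argument is correct and is essentially the paper's own proof, which likewise writes $L_1\hat B = L_1L_2UB = TB = xB$ using the definition $\hat B = L_2UB$ and the factorization $T = L_1L_2U$. The extra remark on banded matrices guaranteeing locally finite sums is a harmless elaboration of the same one-line computation.
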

\begin{proof}
	Equations \eqref{eq:L1hatB} imply
	$	L_1 \hat B=L_1L_2 U B=TB=xB$.
\end{proof}
\begin{pro}
The eigenvalue properties
$	\hat T \hat B=x \hat B$ and $\hat{\hat T}\hat{\hat B}= x\hat{\hat B}$ are satisfied.
\end{pro}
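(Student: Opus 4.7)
The plan is to unravel the definitions and apply the lemma $L_1 \hat B = xB$ that was just proved in \eqref{eq:L1hatBbis}, together with the original eigenvalue equation $TB = xB$. Both identities are essentially one-line computations once we keep track of the bidiagonal ordering.

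For the first claim, I would start from $\hat T = L_2 U L_1$ and compute
\begin{align*}
\hat T \hat B = L_2 U L_1 \hat B = L_2 U (xB) = x (L_2 U B) = x \hat B,
\end{align*}
where the second equality is precisely the content of the lemma, and the last equality uses the definition $\hat B = L_2 U B$ from \eqref{eq:L1hatB}. Note that $x$ commutes with the (constant) matrices $L_2, U$, which is what justifies pulling it out.

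For the second claim, I would proceed analogously. Starting from $\hat{\hat T} = U L_1 L_2$ and $\hat{\hat B} = UB$,
\begin{align*}
\hat{\hat T}\hat{\hat B} = U L_1 L_2 U B = U L_1 \hat B = U (xB) = x (UB) = x \hat{\hat B},
\end{align*}
using $\hat B = L_2 UB$ and then the lemma in the form $L_1 \hat B = xB$.

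There is no real obstacle here: the only subtlety is making sure the associativity of the (banded, locally finite) matrix products with the semi-infinite polynomial vector $B$ is legitimate, but this is immediate since each row of $L_1, L_2, U$ has only finitely many nonzero entries, so the products $L_2 U B$, $U L_1 \hat B$, etc.\ involve only finite sums row-by-row. Thus no convergence issue arises and the chain of equalities above is a genuine identity of formal polynomial vectors.
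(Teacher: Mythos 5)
Your proposal is correct and follows essentially the same one-line computation as the paper: both identities are obtained by regrouping the bidiagonal factors and using $L_1\hat B=xB$ together with $\hat B=L_2UB$, $\hat{\hat B}=UB$ (the paper writes the second chain as $UL_1L_2UB=UTB=xUB$, which is the same argument with the factors grouped as $T=L_1L_2U$ instead of via the lemma). The remark on row-finiteness of the banded factors is a harmless extra justification, not a difference in method.
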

\begin{proof}

Equations \eqref{eq:L1hatB} and \eqref{eq:L1hatBbis} lead by direct computation  to
\begin{align*}
	\hat T \hat B&=L_2 U L_1\hat B=x  L_2 U B=x \hat B,&
	\hat{\hat T}\hat{\hat B}&= U L_1 L_2 UB=UTB=xUB=x\hat{\hat B}.
\end{align*}
\end{proof}

Let us denote by  if $\tilde T^{[n]}$ and $\tilde{\tilde T}^{[n]}$  the 
$(n+1)$-th leading principal submatrices of  $\hat T$ and $\hat{\hat T}$.

\begin{lemma}
	The polynomials $\hat B_n$ and $\hat {\hat B}_n$ 
	can be expressed  as
	$		\hat{B}_n=x\tilde B_n$ and $ \hat{\hat B}_n=x \tilde{\tilde B}_n$, 
	with the monic polynomials  $\tilde B_n, \tilde{\tilde B}_n$ having degree $n$. 
\end{lemma}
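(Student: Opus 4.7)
The plan is to show that each polynomial $\hat B_n$ and $\hat{\hat B}_n$ vanishes at $x=0$, and then simply read off the degree and leading coefficient. The vanishing at the origin will come from the factorization of $T$ combined with the invertibility of the lower unitriangular factors.

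First I would start from the key identities already in hand: $\hat{\hat B}=UB$, $\hat B = L_2 \hat{\hat B}$, and the consequence $L_1\hat B = xB$ recorded in \eqref{eq:L1hatBbis}. Evaluating the last identity at $x=0$ gives $L_1\hat B(0)=0$. Since $L_1$ is lower unitriangular (all diagonal entries equal to $1$), the homogeneous system $L_1 v=0$ admits only the trivial solution by forward substitution, and therefore $\hat B(0)=0$, i.e.\ $\hat B_n(0)=0$ for all $n\in\N_0$. Substituting this into $\hat B=L_2\hat{\hat B}$ yields $L_2\hat{\hat B}(0)=0$, and the same unitriangularity argument applied to $L_2$ forces $\hat{\hat B}(0)=0$. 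Hence $x$ divides each polynomial $\hat B_n$ and $\hat{\hat B}_n$, so one can write $\hat B_n = x\tilde B_n$ and $\hat{\hat B}_n = x\tilde{\tilde B}_n$ for some polynomials $\tilde B_n$ and $\tilde{\tilde B}_n$.

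To finish, I would pin down the degrees and the leading coefficients. Using the bidiagonal structure of $U$ one has $\hat{\hat B}_n=\alpha_{3n+1}B_n+B_{n+1}$, and since $B_{n+1}$ is monic of degree $n+1$ while $B_n$ has degree $n$, the polynomial $\hat{\hat B}_n$ is monic of degree $n+1$; therefore $\tilde{\tilde B}_n$ is monic of degree $n$. Similarly $\hat B_n=\alpha_{3n}\hat{\hat B}_{n-1}+\hat{\hat B}_n$ from the bidiagonal action of $L_2$, so $\hat B_n$ inherits monic degree $n+1$ from $\hat{\hat B}_n$, and $\tilde B_n$ is monic of degree $n$.

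There is no real obstacle here; the only point requiring a word of care is that, in the semi-infinite setting, $L_1 v=0$ truly implies $v=0$, which is immediate by the row-by-row recursion $v_0=0$, $\alpha_2 v_0+v_1=0$, and so on. The rest is a direct bookkeeping of degrees.
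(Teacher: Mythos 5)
Your proof is correct, but it takes a different route from the paper. The paper's own argument is by induction on the recurrences: it computes the first entry explicitly, $\hat B_0=\hat{\hat B}_0=\alpha_1+B_1=\alpha_1+x-c_0=x$, and then propagates the factor of $x$ (and the monic degree count) through the four-term recurrence relations determined by the banded Hessenberg matrices $\hat T$ and $\hat{\hat T}$, i.e.\ it relies on the eigenvalue relations $\hat T\hat B=x\hat B$ and $\hat{\hat T}\hat{\hat B}=x\hat{\hat B}$ established in the preceding proposition. You instead evaluate the identity $L_1\hat B=xB$ of \eqref{eq:L1hatBbis} at $x=0$ and use forward substitution in the lower unitriangular bidiagonal factor $L_1$ to get $\hat B(0)=0$, then repeat with $\hat B=L_2\hat{\hat B}$ to get $\hat{\hat B}(0)=0$, and finally read off monicity and degree $n+1$ from the bidiagonal actions of $U$ and $L_2$ (the degree statement is in fact already recorded in the paper right after \eqref{eq:L1hatB}). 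Your version needs no induction on the transformed recurrences and does not use the eigenvalue properties of $\hat T$, $\hat{\hat T}$ at all, only the defining factorizations, so it is slightly more self-contained; it is also exactly the mechanism the paper itself uses later, in the proof of Theorem \ref{teo:alphasBA}, where $\hat B(0)=0$ is converted into $UB(0)=0$ and $A^{(1)}(0)L_1=0$. The paper's inductive route, on the other hand, keeps the argument parallel to how the transformed sequences are handled elsewhere (e.g.\ for $\hat A^{(2)}_n$ and $\hat{\hat A}^{(2)}_n$, where only the first entries are computed and the recurrence does the rest). The one point you rightly flag, that $L_1v=0$ forces $v=0$ in the semi-infinite setting, is indeed immediate from unitriangularity, and your convention-handling at $n=0$ ($\hat B_0=\hat{\hat B}_0$, no subdiagonal term) is a harmless detail.
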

\begin{proof}
	One has that
$\hat{\hat B}_0=\hat B_0=\alpha_1+B_1=\alpha_1+x-c_0=x$.
Then, as the sequences of polynomials are found by the recurrence determined by the banded Hessenberg matrices $\hat T$ and $\hat{\hat T}$, respectively,  we find that the desired result.
\end{proof}

We call these   polynomials $\tilde B_n$ and $  \tilde{\tilde B}_n$ as Darboux transformed polynomials of type II.

\begin{pro}
The entries of  the  Darboux transformed polynomial sequences of type II
	\begin{align}\label{eq:hatB}
	 \tilde  B&=\frac{1}{x} L_2 U B, &	\tilde{\tilde B}&\coloneq \frac{1}{x}UB,
	\end{align}
read
\begin{align}
\label{eq:tildeB}	\tilde B_n &
=\frac{1}{x}\big(B_{n+1}+(\alpha_{3n+1}+\alpha_{3n})B_n+\alpha_{3n}\alpha_{3n-2} B_{n-1}\big), 
&
\tilde{	\tilde B}_n&
=\frac{1}{x}\big(B_{n+1}+\alpha_{3n+1}B_{n}\big)
\end{align}
were  we take $\alpha_k=0$ for $k\in \Z_-$. The following determinantal expressions hold
\begin{align*}
	\tilde B_{n+1}&=\det \big(xI_{n+1}-\tilde T^{[n]}\big), &  \tilde{	\tilde B}_{n+1}&=\det \big(xI_{n+1}-\tilde{	\tilde T}^{[n]}\big).
\end{align*}

\end{pro}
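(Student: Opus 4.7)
My plan is to treat the two claims separately. First I would derive the explicit formulas for the Darboux transformed polynomials directly from their matrix definitions. Since $U$ is upper bidiagonal with $U_{n,n}=\alpha_{3n+1}$ and $U_{n,n+1}=1$, multiplying out $UB$ row by row gives $(UB)_n = \alpha_{3n+1}B_n + B_{n+1}$, and dividing by $x$ yields the claimed formula for $\tilde{\tilde B}_n$. Then, since $L_2$ is lower bidiagonal with $(L_2)_{n,n}=1$ and $(L_2)_{n,n-1}=\alpha_{3n}$, the identity $\tilde B = L_2 \tilde{\tilde B}$ gives $\tilde B_n = \tilde{\tilde B}_n + \alpha_{3n}\tilde{\tilde B}_{n-1}$. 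Substituting the expression for $\tilde{\tilde B}$ and grouping terms in $B_{n+1}, B_n, B_{n-1}$ produces the stated formula for $\tilde B_n$. The stated convention $\alpha_k=0$ for $k\in\Z_-$ (together with $B_{-1}=0$) handles the $n=0,1$ cases uniformly.

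For the determinantal expressions I would leverage the eigenvalue identities already proven. From $\hat T\hat B = x\hat B$ and $\hat B_n = x\tilde B_n$ we cancel an $x$ to get $\hat T\tilde B = x\tilde B$, and analogously $\hat{\hat T}\tilde{\tilde B}=x\tilde{\tilde B}$. Since $\hat T$ is tetradiagonal lower Hessenberg with unit superdiagonal of the same structure as $T$, and since $\tilde T^{[n]}$ is its $(n+1)$-th leading principal submatrix, the recurrence $(\hat T\tilde B)_m = x\tilde B_m$ for $m\le n-1$ involves only $\tilde B_0,\dots,\tilde B_n$, while at row $m=n$ the $\tilde B_{n+1}$-term appears via the unit superdiagonal entry. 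Rewriting this as a linear system yields
\[
(xI_{n+1}-\tilde T^{[n]})\,(\tilde B_0,\dots,\tilde B_n)^\top \;=\; \tilde B_{n+1}\,e_{n+1}.
\]
Applying Cramer's rule to the first component and using $\tilde B_0 = 1$ (which follows from $\hat B_0 = \alpha_1 + B_1 = c_0 + (x-c_0) = x$) gives $\det(xI_{n+1}-\tilde T^{[n]}) = \tilde B_{n+1}\,C_{n+1,1}$, where the cofactor $C_{n+1,1}$ comes from an upper-triangular minor of size $n$ with $-1$'s on the diagonal; its value is $(-1)^n\cdot(-1)^{n+2}=1$. The same argument applies verbatim to $\tilde{\tilde B}_{n+1}$ using $\hat{\hat T}$ and $\tilde{\tilde T}^{[n]}$, so no new ideas are needed.

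The only delicate point is the low-index boundary: one must check $\tilde B_0 = \tilde{\tilde B}_0 = 1$ and verify that the convention $\alpha_k=0$ for non-positive $k$ makes the $n=0$ case of the $\tilde B_n$-formula consistent with $\tilde B_0 = \tilde{\tilde B}_0$, and similarly that the $n=0,1$ cases of the determinantal formula agree with direct computation. Once these boundary checks are performed, the remainder of the proof is essentially the same as the proof of Proposition \ref{pro:determintal_second_kind} applied to the Hessenberg matrices $\hat T$ and $\hat{\hat T}$ instead of $T$.
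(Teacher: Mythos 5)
Your proposal is correct, and its first half coincides with the paper's: the formulas \eqref{eq:tildeB} are obtained exactly as you do, by writing out the entries of $UB$ and then of $L_2UB$ row by row (``the entries of the defining equations''). For the determinantal identities the paper proceeds slightly differently: it observes that $\det\big(xI_{n+1}-\tilde T^{[n]}\big)$, expanded along its last row, satisfies the same four-term recursion as $\tilde B_{n+1}$ (the recursion encoded by $\hat T$, which you recover by cancelling $x$ in $\hat T\hat B=x\hat B$) with the same initial conditions, i.e.\ the argument of Proposition \ref{pro:determintal_second_kind} transplanted to $\hat T$ and $\hat{\hat T}$. You instead collect the truncated eigenvalue relations into $\big(xI_{n+1}-\tilde T^{[n]}\big)\,(\tilde B_0,\dots,\tilde B_n)^\top=\tilde B_{n+1}e_{n+1}$ and apply Cramer's rule to the first component together with $\tilde B_0=\tilde{\tilde B}_0=1$; this is a mild repackaging of the same structural facts (unit superdiagonal, eigenvector property) that delivers the identity in one stroke instead of by induction, at the cost of a little cofactor bookkeeping. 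One small slip there: the minor obtained by deleting the last row and first column is lower, not upper, triangular with $-1$'s on its diagonal, but its determinant is $(-1)^n$ either way, so your sign count and the conclusion stand.
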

\begin{proof}
	
Equation  \eqref{eq:tildeB} 
appears as the entries of the defining equations. The determinantal expressions follow from the fact that its expansions along the last row satisfy the recursion relations with adequate initial conditions.
\end{proof}

Following definitions given in \eqref{eq:TNk} and \eqref{eq:BNk} we consider similar objects in this context. That is, we denote by $\tilde T^{[n,k]}$ ($\tilde {\tilde T}^{[n,k]}$)  the matrix obtained from $\tilde T^{[n]}$ ($\tilde {\tilde T}^{[n]}$) by erasing the first $k$ rows and columns. The corresponding characteristic polynomials are 
\begin{align*}
	\tilde B^{[k]}_{n+1}&=\det \big(xI_{n+1-k}-\tilde T^{[n,k]}\big), &  \tilde{	\tilde B}^{[k]}_{n+1}&=\det \big(xI_{n+1-k}
	-\tilde{	\tilde T}^{[n,k]}\big).
\end{align*}
These polynomials $\tilde B_{n}^{[k]}$ ($\tilde {\tilde B}_n^{[k]}$)
satisfy the same recursion relations, determined by $\tilde T$ ($\tilde {\tilde T}$) as do $\tilde B_{n}$ ($\tilde {\tilde B}_n$) but with different initial conditions. Following ii) in Proposition \ref{pro:determintal_second_kind} we have the transformed recursion polynomials of type II 
\begin{align*}
\tilde	B_{n+1}^{(1)}&=\tilde B^{[1]}_{n+1},& \tilde{\tilde	B}_{n+1}^{(1)}&=\tilde{\tilde B}^{[1]}_{n+1}, &
 \tilde	B_{n+1}^{(2)}&= \tilde B^{[2]}_{n+1}-\nu\tilde B^{[1]}_{n+1}, &
 \tilde{\tilde B}_{n+1}^{(2)}&= \tilde{	\tilde B}^{[2]}_{n+1}-\nu \tilde{\tilde B}^{[1]}_{n+1}.
\end{align*}
Then we consider the following vectors of polynomials 
\begin{align*}
	\hat	B_{n+1}^{(1)}&=	x\tilde	B_{n+1}^{(1)},& \hat{\hat	B}_{n+1}^{(1)}&=x\tilde{\tilde B}^{[1]}_{n+1}, &
	\hat 	B_{n+1}^{(2)}&= x \tilde B^{(2)}_{n+1},&
	\hat 	{\hat B}_{n+1}^{(2)}&= x \tilde{\tilde B}^{(2)}_{n+1}.
\end{align*}

\begin{pro}[Vector Convergents]
	These recursion polynomials correspond to the  vector convergent  $y^1_n=(A_{n,0},A_{n,1},A_{n,2})$ discussed in \cite{Aptekarev_Kaliaguine_VanIseghem} as follows
	\begin{align*}
		B_n&=A_{3n,0}, & \hat B_n&=A_{3n+1,0}, &\hat {\hat B}_n&=A_{3n+2,0},\\
		B_n^{(1)}&=A_{3n,1}, & \hat B_n^{(1)}&=A_{3n+1,1}, &\hat {\hat B}_n^{(1)}&=A_{3n+2,1},\\
-\frac{1}{\nu}	B_n^{(2)}&=A_{3n,2}, & -\frac{1}{\nu}	\hat B_n^{(2)}&=A_{3n+1,2}, &-\frac{1}{\nu}	\hat {\hat B}_n^{(2)}&=A_{3n+2,2},
	\end{align*}
\end{pro}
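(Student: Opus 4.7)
The plan is to verify the identification by matching both one-step recursions and initial conditions. The bidiagonal factorization $T=L_1L_2U$ decomposes the four-term recursion \eqref{eq:recurrence_B} into three elementary two-term recursions. Explicitly, from $TB=xB$ together with \eqref{eq:L1hatB} and \eqref{eq:L1hatBbis} we obtain the chain
\begin{align*}
UB&=\hat{\hat B}, & L_2\hat{\hat B}&=\hat B, & L_1\hat B&=xB,
\end{align*}
and since each of $L_1,L_2,U$ is bidiagonal, each link in this chain is a simple two-term relation involving the coefficients $\alpha_{3n+1}$, $\alpha_{3n}$ and $\alpha_{3n-1}$ respectively. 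This is precisely the one-step recursion obeyed by the AKV vector convergents $A_{n,0}$: successive application of the three bidiagonal factors transports $A_{3n,0}\mapsto A_{3n+2,0}\mapsto A_{3n+1,0}\mapsto A_{3(n+1),0}$ (with the appropriate reindexing), and three iterations of this one-step rule reproduce the four-term recursion \eqref{eq:recurrence_B}.

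The first step of the plan, therefore, is to write the AKV recursion in a form where the coefficients are identified with $\{\alpha_j\}_{j\in\N}$ (this correspondence is the one already recorded for the hypergeometric case in the Introduction, and is the standard branched continued fraction interpretation of the bidiagonal factorization). Once this identification is made, $B_n=A_{3n,0}$, $\hat{\hat B}_n=A_{3n+2,0}$ and $\hat B_n=A_{3n+1,0}$ follow from the chain above together with the matching initial data $B_0=1=A_{0,0}$.

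The second step is to apply the same argument to the first-kind sequences. By construction, $\hat B^{(1)}$ and $\hat{\hat B}^{(1)}$ satisfy the same two-term relations as $\hat B$ and $\hat{\hat B}$, only with the initial conditions shifted to those corresponding to the leading principal submatrix with its first row and column removed; these are exactly the initial conditions for $A_{n,1}$. Since $A^{(1)}_0=1$ matches the AKV normalization at index $k=1$, the identification $B^{(1)}_n=A_{3n,1}$ together with its two Darboux companions follows. For the sequences $B^{(2)}_n$, $\hat B^{(2)}_n$, $\hat{\hat B}^{(2)}_n$, the initial conditions $A^{(2)}_0=0$, $A^{(2)}_1=1$ differ from the AKV normalization for $A_{n,2}$ by an overall factor, which by direct comparison (using the expression $A^{(2)}_2=-b_1/a_2$ and the analogous AKV value, both of which are reconstructible from the bidiagonal parameters via \eqref{eq:alphas}) equals $-1/\nu$. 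Hence $-\tfrac{1}{\nu}B^{(2)}_n=A_{3n,2}$, and its Darboux transforms similarly.

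The main obstacle is bookkeeping: one must (i) translate the AKV one-step recursion into the language of the bidiagonal coefficients $\alpha_j$, (ii) reconcile the three different normalizations for $A_{n,0}$, $A_{n,1}$, $A_{n,2}$ with the initial data fixed in Definitions \ref{def:typeII} and \ref{def:typeI}, and in particular (iii) justify the overall factor $-1/\nu$ in the third set of identifications. Once these conventions are aligned, the proposition is a direct consequence of the chain of two-term relations furnished by the bidiagonal factorization, with no further computation needed beyond the comparison of initial values.
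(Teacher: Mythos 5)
Your proposal takes essentially the same route as the paper: its proof consists precisely of observing that all these sequences satisfy the AKV one-step recursion (their Equation (23)) together with the adequate initial conditions, which is exactly your plan of reading the two-term relations $UB=\hat{\hat B}$, $L_2\hat{\hat B}=\hat B$, $L_1\hat B=xB$ as that recursion with coefficients $\alpha_j$ and then matching initial data, including the $-1/\nu$ normalization for the second components. Only a minor bookkeeping caveat: the single AKV index increases in the order $B_n,\,\hat B_n,\,\hat{\hat B}_n,\,B_{n+1}$ (indices $3n,3n+1,3n+2,3n+3$), not $B_n\mapsto\hat{\hat B}_n\mapsto\hat B_n$ as your ``transport'' order suggests, but this reindexing does not affect the argument.
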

\begin{proof}
		It follows from the fact that they satisfy the recursion relation \cite[Equation (23)]{Aptekarev_Kaliaguine_VanIseghem} and adequate initial conditions.
\end{proof}

Then, following this dictionary  the important \cite[Lemma 5]{Aptekarev_Kaliaguine_VanIseghem} states for $x\geqslant 0$ that
\begin{align*}
	\begin{vNiceMatrix}
		\hat B_n&  \hat B^{(1)}_n \\[2pt]
		 B_n &   B^{(1)}_n 
	\end{vNiceMatrix}&\leqslant 0, & 	
\begin{vNiceMatrix}
	 {\hat B}_n &   {\hat B}^{(2)}_n \\[2pt]
	B_n &   B^{(2)}_n 
\end{vNiceMatrix}&\leqslant 0, &\begin{vNiceMatrix}
\hat{\hat B}_n &  \hat{\hat B}^{(1)}_n \\[2pt]
B_n &   B^{(1)}_n 
\end{vNiceMatrix}&\leqslant 0, & 	
\begin{vNiceMatrix}
\hat{\hat B}_n &   \hat{\hat B}^{(2)}_n \\[2pt]
B_n &   B^{(2)}_n 
\end{vNiceMatrix}&\leqslant 0, \\
\begin{vNiceMatrix}
	\hat{\hat B}_n &  \hat{\hat B}^{(1)}_n \\[2pt]
	\hat B_n &   \hat B^{(1)}_n 
\end{vNiceMatrix}&\leqslant 0, & 	
\begin{vNiceMatrix}
	\hat{\hat B}_n &   \hat{\hat B}^{(2)}_n \\[2pt]
\hat 	B_n &   \hat B^{(2)}_n 
\end{vNiceMatrix}&\leqslant 0,  &\begin{vNiceMatrix}
{ B}_{n+1} &  { B}^{(1)}_{n+1} \\[2pt]
\hat B_n &   \hat B^{(1)}_n 
\end{vNiceMatrix}&\leqslant 0, & 	
\begin{vNiceMatrix}
{ B}_{n+1} &  { B}^{(2)}_{n+1} \\[2pt]
\hat 	B_n &   \hat B^{(2)}_n 
\end{vNiceMatrix}&\leqslant 0,  \\
\begin{vNiceMatrix}
	{ B}_{n+1} &  { B}^{(1)}_{n+1} \\[2pt]
\hat{\hat B}_n&   \hat{\hat B}^{(1)}_n 
\end{vNiceMatrix}&\leqslant 0, & 	
\begin{vNiceMatrix}
{ B}_{n+1} &  { B}^{(2)}_{n+1} \\[2pt]
	\hat{\hat B}_n &   \hat{\hat B}^{(2)}_n 
\end{vNiceMatrix}&\leqslant 0,  &\begin{vNiceMatrix}
	\hat{ B}_{n+1} &  \hat{ B}^{(1)}_{n+1} \\[2pt]
	\hat{\hat B}_n &   \hat{\hat B}^{(1)}_n 
\end{vNiceMatrix}&\leqslant 0, & 	
\begin{vNiceMatrix}
\hat	{ B}_{n+1} &  \hat{ B}^{(2)}_{n+1} \\[2pt]
	\hat 	{\hat B}_n &   \hat {\hat B}^{(2)}_n 
\end{vNiceMatrix}&\leqslant 0.
\end{align*}
\begin{rem}
	Using these facts,  Aptekarev, Kalyagin and Van Iseghem in \cite[Lemmata 6 \& 7]{Aptekarev_Kaliaguine_VanIseghem}   deduce the degree of polynomials, simplicity of zeros and interlacing properties of $B_n$ with $B_{n-1}$,  $B_n^{(1)}$ and $B^{(2)}_n$.
Notice that we derive the same result by just using the spectral properties of regular oscillatory matrices.
\end{rem}

For recursion polynomials of type I, we introduce the following polynomials
$	\hat A^{(2)}\coloneq A^{(1)}L_1$,	$\hat A^{(1)}\coloneq A^{(2)}L_1$,
$\hat{	\hat A}^{(1)}\coloneq A^{(1)}L_1L_2$ and 	$\hat {\hat A}^{(2)}\coloneq A^{(2)}L_1L_2$.
\begin{pro}
Vectors  $	\hat A^{(1)}$, $	\hat A^{(2)}$ are left eigenvectors of $\hat T$ and $\hat {	\hat A}^{(1)}$, $\hat {	\hat A}^{(2)}$ are left eigenvectors of $\hat{\hat T}$.
\end{pro}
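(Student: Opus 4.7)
The plan is to exploit associativity of matrix multiplication together with the bidiagonal factorization $T=L_{1}L_{2}U$ and the left eigenvector relations $A^{(a)}T=xA^{(a)}$ recorded in Definition \ref{def:typeI}. Since $\hat T$ and $\hat{\hat T}$ differ from $T$ only by a cyclic permutation of the three bidiagonal factors, the proof is a direct computation that interweaves the factorization with the eigenvalue equation.

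First, for $\hat T=L_{2}UL_{1}$ and $\hat A^{(2)}=A^{(1)}L_{1}$, I would write
\begin{align*}
\hat A^{(2)}\hat T \;=\; A^{(1)}L_{1}\,(L_{2}UL_{1}) \;=\; A^{(1)}(L_{1}L_{2}U)L_{1} \;=\; A^{(1)}T L_{1} \;=\; x\, A^{(1)}L_{1} \;=\; x\,\hat A^{(2)}.
\end{align*}
Replacing $A^{(1)}$ by $A^{(2)}$ throughout yields, by the same one-line manipulation, $\hat A^{(1)}\hat T = x\,\hat A^{(1)}$.

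Next, for $\hat{\hat T}=UL_{1}L_{2}$ and $\hat{\hat A}^{(1)}=A^{(1)}L_{1}L_{2}$, the analogous computation is
\begin{align*}
\hat{\hat A}^{(1)}\hat{\hat T} \;=\; A^{(1)}L_{1}L_{2}\,(UL_{1}L_{2}) \;=\; A^{(1)}(L_{1}L_{2}U)L_{1}L_{2} \;=\; A^{(1)}T L_{1}L_{2} \;=\; x\,\hat{\hat A}^{(1)},
\end{align*}
and substituting $A^{(2)}$ for $A^{(1)}$ gives $\hat{\hat A}^{(2)}\hat{\hat T}=x\,\hat{\hat A}^{(2)}$.

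There is no genuine obstacle: the argument amounts to observing that in each case the rightmost group of factors reassembles into $T$, so the eigenvalue relation can be applied. The only point worth flagging is that the manipulations take place at the level of semi-infinite matrices acting on the row vectors $A^{(a)}$; since $L_{1}$ and $L_{2}$ are banded (in fact bidiagonal) the products $A^{(a)}L_{1}$ and $A^{(a)}L_{1}L_{2}$ are well-defined row vectors with polynomial entries, and the associativity used above is therefore legitimate.
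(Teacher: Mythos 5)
Your proposal is correct and coincides with the paper's own argument: the paper proves the case $\hat A^{(2)}\hat T = A^{(1)}L_1L_2UL_1 = A^{(1)}TL_1 = xA^{(1)}L_1 = x\hat A^{(2)}$ and notes the remaining cases are analogous, which is precisely your computation (your added remark on the well-definedness of products of banded semi-infinite matrices is a harmless extra).
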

\begin{proof}
A direct computation shows that
$		\hat A^{(2)}\hat T=A^{(1)}L_1 L_2UL_1= A^{(1)}T L_1=xA^{(1)}L_1=	x \hat A^{(2)}$.
The other cases are proven similarly.
\end{proof}
\begin{lemma}\label{lemma:previous}
Let us assume that  $1+\alpha_{2}\nu=0$. Then, 
	$\hat A^{(2)}_0=0$ and $\hat A^{(2)}_1=\frac{1}{\alpha_3\alpha_1}x$.	
\end{lemma}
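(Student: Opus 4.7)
My plan is a direct computation, exploiting only the definition $\hat A^{(2)} = A^{(1)} L_1$, the initial data for $A^{(1)}$ from Definition \ref{def:typeI}, and the relations \eqref{eq:alphas} between the coefficients $a_n, b_n, c_n$ and the bidiagonal parameters $\alpha_j$.

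First, I would read off from the bidiagonal structure of $L_1$ in \eqref{eq:LU_J_oscillatory_factors} that $(L_1)_{n,n}=1$ and $(L_1)_{n,n-1}=\alpha_{3n-1}$ (indexing from $0$). Therefore the row vector $\hat A^{(2)}=A^{(1)}L_1$ has entries
\begin{align*}
\hat A^{(2)}_n = A^{(1)}_n + \alpha_{3n+2}\,A^{(1)}_{n+1}.
\end{align*}
For $n=0$, using $A^{(1)}_0=1$ and $A^{(1)}_1=\nu$ from Definition \ref{def:typeI}, this gives $\hat A^{(2)}_0 = 1+\alpha_2\nu$, which vanishes under the hypothesis $1+\alpha_2\nu=0$.

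For $n=1$, I would get $\hat A^{(2)}_1 = A^{(1)}_1+\alpha_5 A^{(1)}_2 = \nu + \alpha_5\bigl(\tfrac{x}{a_2}-\tfrac{c_0+b_1\nu}{a_2}\bigr)$. The key step is to substitute the expressions of $a_2,b_1,c_0$ in terms of the $\alpha_j$'s coming from \eqref{eq:alphas}, namely $c_0=\alpha_1$, $b_1=\alpha_1(\alpha_2+\alpha_3)$, and $a_2=\alpha_1\alpha_3\alpha_5$ (recall $\alpha_0=\alpha_{-1}=0$). After cancelling $\alpha_5$ in numerator and denominator, a short algebraic simplification yields
\begin{align*}
\hat A^{(2)}_1 = \frac{x}{\alpha_3\alpha_1} - \frac{1+\alpha_2\nu}{\alpha_3},
\end{align*}
and applying the hypothesis $1+\alpha_2\nu=0$ gives the claimed value $\hat A^{(2)}_1 = \frac{x}{\alpha_3\alpha_1}$.

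There is no significant obstacle: the whole argument is a two-line evaluation of the first two components of $A^{(1)}L_1$, the only care needed being to correctly translate between the $a,b,c$ recursion coefficients and the $\alpha_j$ bidiagonal parameters so that the combination $(c_0+b_1\nu)/a_2$ collapses to $(1+\alpha_2\nu)/(\alpha_3\alpha_5)+\nu/\alpha_5$ (up to signs) and $\nu$ cancels exactly when $1+\alpha_2\nu=0$.
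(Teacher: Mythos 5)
Your proof is correct and follows essentially the same route as the paper: it expands $\hat A^{(2)}_n=A^{(1)}_n+\alpha_{3n+2}A^{(1)}_{n+1}$ from $\hat A^{(2)}=A^{(1)}L_1$, uses the initial data of Definition \ref{def:typeI}, and substitutes $c_0=\alpha_1$, $b_1=(\alpha_2+\alpha_3)\alpha_1$, $a_2=\alpha_1\alpha_3\alpha_5$ to reach $\hat A^{(2)}_1=-\frac{1+\alpha_2\nu}{\alpha_3}+\frac{x}{\alpha_1\alpha_3}$, exactly as in the paper's computation. The only cosmetic point is that these three identities are best cited as direct consequences of the factorization $T=L_1L_2U$ (as the paper does) rather than via \eqref{eq:alphas}, which does not appear in the compiled text.
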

\begin{proof}
Let us consider the vector
$	\hat A^{(2)}=A^{(1)} L_1$
with  components
$	\hat A^{(2)}_n=A^{(1)}_n+\alpha_{3n+2}A^{(1)}_{n+1}$, $n\in\N_0$.
The first two entries are
\begin{align*}
	\hat A^{(2)}_0&=A^{(1)}_0+\alpha_{2}A^{(1)}_{1}=1+\alpha_{2}\nu,\\
	\hat A^{(2)}_1&=A^{(1)}_1+\alpha_{5}A^{(1)}_{2}=\nu-\alpha_5\frac{c_0+b_1\nu}{a_2}+\frac{\alpha_5}{a_2}x=\nu-\alpha_5\frac{\alpha_1+(\alpha_3+\alpha_2)\alpha_1\nu}{\alpha_5\alpha_3\alpha_1}+\frac{\alpha_5}{\alpha_5\alpha_3\alpha_1}x\\&=-\frac{1+\alpha_2\nu}{\alpha_3}
	+\frac{1}{\alpha_3\alpha_1}x.
\end{align*}
Here we have used Definition \ref{def:typeI}  and
$c_0=\alpha_1$, $b_1=(\alpha_3+\alpha_2)\alpha_1$ and $a_2=\alpha_5\alpha_3\alpha_1$.
Then, as $1+\alpha_{2}\nu=0$, we find
the stated result.
\end{proof}

\begin{pro}
	If $	1+\nu\alpha_2=0$, 
we can write
		$\hat A^{(2)}_n= x \tilde A^{(2)}_n$ and $\hat {\hat A}^{(1)}_n= x \tilde {\tilde A}^{(1)}_n$, 
for some polynomials $ \tilde A^{(2)}_n,\tilde {\tilde A}^{(1)}_n$.
\end{pro}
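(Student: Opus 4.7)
The plan is to show that every component $\hat A^{(2)}_n$, and then $\hat{\hat A}^{(1)}_n$, vanishes at $x=0$, so that division by $x$ produces a polynomial. The starting point is Lemma \ref{lemma:previous}: under the hypothesis $1+\nu\alpha_2=0$ it already provides the two base cases $\hat A^{(2)}_0=0$ and $\hat A^{(2)}_1=x/(\alpha_1\alpha_3)$, both manifestly divisible by $x$. To propagate this to all $n$ I would use the left-eigenvector property $\hat A^{(2)}\hat T=x\hat A^{(2)}$ established earlier in this section, together with the fact that $\hat T$ is itself a tetradiagonal Hessenberg matrix with the same structure as $T$, whose entries are listed in \eqref{eq:entries hat_T}.

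Reading off the first component of $\hat A^{(2)}\hat T=x\hat A^{(2)}$ yields the initial three-term relation
\begin{align*}
\hat a_2\hat A^{(2)}_2=(x-\hat c_0)\hat A^{(2)}_0-\hat b_1\hat A^{(2)}_1,
\end{align*}
so the two base cases force $\hat A^{(2)}_2\propto x$ as well. For $n\geq 3$ the same procedure delivers exactly the four-term recurrence \eqref{eq:recursion_dual_A} written with the hatted coefficients, and a straightforward induction on $n$ --- exploiting that divisibility by $x$ is preserved under the ring operations appearing on the right-hand side --- gives $\hat A^{(2)}_n=x\tilde A^{(2)}_n$ for every $n\in\N_0$. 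For the second assertion I would use the identity $\hat{\hat A}^{(1)}=A^{(1)}L_1L_2=\hat A^{(2)}L_2$, whose componentwise form
\begin{align*}
\hat{\hat A}^{(1)}_n=\hat A^{(2)}_n+\alpha_{3n+3}\hat A^{(2)}_{n+1}
\end{align*}
makes divisibility of $\hat{\hat A}^{(1)}_n$ by $x$ immediate from the first part, yielding $\hat{\hat A}^{(1)}_n=x\tilde{\tilde A}^{(1)}_n$.

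The only delicate point in the whole argument is the base step $\hat A^{(2)}_2\propto x$, and this is precisely where the hypothesis $1+\nu\alpha_2=0$ is essential: it is this assumption that produces $\hat A^{(2)}_0=0$ in Lemma \ref{lemma:previous} and thereby kills the otherwise problematic $(x-\hat c_0)\hat A^{(2)}_0$ contribution, which carries a nonzero constant-in-$x$ term. Beyond this calibration of the initial data, the propagation is pure bookkeeping and I do not expect any hidden obstacle.
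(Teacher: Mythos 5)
Your proof is correct and follows essentially the same route as the paper: the base cases from Lemma \ref{lemma:previous}, the first-component relation of $\hat A^{(2)}\hat T=x\hat A^{(2)}$ to get $\hat A^{(2)}_2\propto x$, and then induction through the hatted four-term recursion. The only difference is that you spell out the second claim via $\hat{\hat A}^{(1)}=\hat A^{(2)}L_2$, i.e.\ $\hat{\hat A}^{(1)}_n=\hat A^{(2)}_n+\alpha_{3n+3}\hat A^{(2)}_{n+1}$, a step the paper leaves implicit; this is a welcome clarification rather than a deviation.
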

\begin{proof}
The recursion relation
$\hat c_0
\hat A^{(2)}_0+\hat b_1\hat A^{(2)}_1+\hat a_2 \hat A^{(2)}_2=x\hat A^{(2)}_0$
and Lemma \ref{lemma:previous} gives   $\hat A^{(2)}_2=-\frac{\hat b_1}{\hat a_2 \alpha_3\alpha_1}x$.  Hence,   induction leads  to the conclusion that $\hat A^{(2)}_n=x\tilde A^{(2)}_n$, for some polynomial $\tilde A^{(2)}_n$.
\end{proof}
%
%
%
%
%
\begin{lemma}
	We have
$\hat {\hat {A}}^{(2)}_0=0$ and $\hat {\hat {A}}^{(2)}_1=\frac{1}{\alpha_4}x$.
\end{lemma}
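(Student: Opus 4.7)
The plan is to compute $\hat{\hat A}^{(2)}_n=(A^{(2)}L_1L_2)_n$ directly, exploiting the fact that $L_1L_2$ is a lower-triangular band matrix with three possibly non-zero diagonals. A short bookkeeping yields
\begin{align*}
(L_1L_2)_{n,n}&=1,&
(L_1L_2)_{n+1,n}&=\alpha_{3n+2}+\alpha_{3n+3},&
(L_1L_2)_{n+2,n}&=\alpha_{3n+5}\alpha_{3n+3},
\end{align*}
so for every $n\in\N_0$,
\begin{align*}
\hat{\hat A}^{(2)}_n=A^{(2)}_n+(\alpha_{3n+2}+\alpha_{3n+3})A^{(2)}_{n+1}+\alpha_{3n+5}\alpha_{3n+3}A^{(2)}_{n+2}.
\end{align*}

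For the first entry I would specialise $n=0$, insert the initial data $A^{(2)}_0=0$, $A^{(2)}_1=1$, $A^{(2)}_2=-b_1/a_2$ from Definition \ref{def:typeI}, and translate $b_1,a_2$ through the matrix entries \eqref{eq:alphas}, i.e.\ $b_1=(\alpha_2+\alpha_3)\alpha_1$ and $a_2=\alpha_5\alpha_3\alpha_1$. The second and third terms then telescope as $(\alpha_2+\alpha_3)-\alpha_5\alpha_3\cdot(\alpha_2+\alpha_3)/(\alpha_5\alpha_3)=0$, giving $\hat{\hat A}^{(2)}_0=0$ as in the proof of Lemma \ref{lemma:previous}, but now for $L_1L_2$ in place of $L_1$.

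For the second entry ($n=1$) I would use the three-term recursion in Definition \ref{def:typeI} to write $a_3A^{(2)}_3=b_1b_2/a_2+x-c_1$, and substitute $c_1=\alpha_2+\alpha_3+\alpha_4$, $b_2=\alpha_5\alpha_3+\alpha_5\alpha_4+\alpha_6\alpha_4$, $a_3=\alpha_8\alpha_6\alpha_4$. The factor $\alpha_8\alpha_6/a_3=1/\alpha_4$ produced by the second sub-diagonal of $L_1L_2$ isolates the linear term $x/\alpha_4$. All the remaining constants group into the same combination $(\alpha_2+\alpha_3)(\alpha_5+\alpha_6)/(\alpha_5\alpha_3)$ that already appeared, with opposite sign, from the first sub-diagonal contribution $A^{(2)}_2(\alpha_5+\alpha_6)$; together with the $+1$ coming from $A^{(2)}_1$ and the $-1$ coming from $-c_1/\alpha_4+(\alpha_2+\alpha_3+\alpha_4)/\alpha_4$ they cancel, leaving $\hat{\hat A}^{(2)}_1=x/\alpha_4$.

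The only delicate point is the algebraic bookkeeping in the second entry, where two independent cancellations must be monitored simultaneously: the purely constant pieces coming from $-c_1$ together with the factor $-1$ in $A^{(2)}_2$, and the $b_1b_2/a_2$ piece which, once factorised, telescopes against $(\alpha_5+\alpha_6)A^{(2)}_2$. Both cancellations are the natural continuation of the identity $\alpha_5\alpha_3A^{(2)}_2=-(\alpha_2+\alpha_3)$ used in Lemma \ref{lemma:previous}, so no new ingredient beyond \eqref{eq:alphas} and the four-term recursion of Definition \ref{def:typeI} is required.
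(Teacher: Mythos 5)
Your proposal is correct and is essentially the paper's own argument: a direct entrywise computation from the initial data $A^{(2)}_0=0$, $A^{(2)}_1=1$, $A^{(2)}_2=-\frac{b_1}{a_2}$, the recursion giving $a_3A^{(2)}_3=b_2\frac{b_1}{a_2}+x-c_1$, and the identities $b_1=(\alpha_2+\alpha_3)\alpha_1$, $a_2=\alpha_5\alpha_3\alpha_1$, $c_1=\alpha_2+\alpha_3+\alpha_4$, $b_2=\alpha_5\alpha_3+\alpha_5\alpha_4+\alpha_6\alpha_4$, $a_3=\alpha_8\alpha_6\alpha_4$ coming from \eqref{eq:alphas}. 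The only difference is bookkeeping: the paper factors the computation through the intermediate vector $\hat A^{(1)}=A^{(2)}L_1$ and then applies $L_2$, whereas you contract $L_1L_2$ into a single three-banded lower triangular matrix with entries $1$, $\alpha_{3n+2}+\alpha_{3n+3}$, $\alpha_{3n+5}\alpha_{3n+3}$ and apply it in one step; the resulting cancellations are the same.
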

\begin{proof}
Let us consider the vector
$	\hat A^{(1)}=A^{(2)} L_1$
with  components
$	\hat A^{(1)}_n=A^{(2)}_n+\alpha_{3n+2}A^{(2)}_{n+1}$, $n\in\N_0$.
The first three entries are
\begin{align*}
	\hat A^{(1)}_0&=A^{(2)}_0+\alpha_{2}A^{(2)}_{1}=\alpha_{2},\\
	\hat A^{(1)}_1&=A^{(2)}_1+\alpha_{5}A^{(2)}_{2}=1-\alpha_5\frac{b_1}{a_2}=1-\alpha_5\frac{(\alpha_3+\alpha_2)\alpha_1}{\alpha_5\alpha_3\alpha_1}=-\frac{\alpha_2}{\alpha_3},
		\\
		\hat A^{(1)}_2&=A^{(2)}_2+\alpha_{8}A^{(2)}_{3}=-\frac{b_1}{a_2}+\frac{\alpha_8}{a_3}	\Big(b_2\frac{b_1}{a_2}+x-c_1\Big)		=\frac{b_1}{a_2}\Big(\frac{\alpha_8b_2}{a_3}-1\Big)-\frac{\alpha_8c_1}{a_3}+\frac{\alpha_8}{a_3}x\\&=
		\frac{\alpha_3+\alpha_2}{\alpha_5\alpha_3}\Big(
		\frac{\alpha_6\alpha_4+\alpha_5\alpha_4+\alpha_5\alpha_3}{\alpha_6\alpha_4}-1
		\Big)-\frac{\alpha_4+\alpha_3+\alpha_2}{\alpha_6\alpha_4}+\frac{1}{\alpha_6\alpha_4}x\\&=
		\frac{\alpha_3+\alpha_2}{\alpha_5\alpha_3}
		\frac{\alpha_5\alpha_4+\alpha_5\alpha_3}{\alpha_6\alpha_4}
		-\frac{\alpha_4+\alpha_3+\alpha_2}{\alpha_6\alpha_4}+\frac{1}{\alpha_6\alpha_4}x=\frac{1}{\alpha_6\alpha_4}\Big(
		\frac{\alpha_3+\alpha_2}{\alpha_3}
		(\alpha_4+\alpha_3)
		-\alpha_4-\alpha_3-\alpha_2+x\Big)\\&=
		\frac{\alpha_2}{\alpha_6\alpha_3}+\frac{1}{\alpha_6\alpha_4}x.
\end{align*}
Then, we consider $	\hat{ \hat {A}}^{(2)}=\hat {A}^{(1)} L_2$ with  components
$\hat {\hat A}^{(2)}_n=\hat A^{(1)}_n+\alpha_{3n+3}\hat A^{(1)}_{n+1}$, $n\in\N_0$.
The first two components being
\begin{align*}
\hat {\hat A}^{(2)}_0&=\hat A^{(1)}_0+\alpha_{3}\hat A^{(1)}_{1}=\alpha_2+\alpha_3\Big(-\frac{\alpha_2}{\alpha_3}\Big)=0,&
\hat {\hat A}^{(2)}_1&=\hat A^{(1)}_1+\alpha_{6}\hat A^{(1)}_{2}=-\frac{\alpha_2}{\alpha_3}+\alpha_6\Big(	\frac{\alpha_2}{\alpha_6\alpha_3}+\frac{1}{\alpha_6\alpha_4}x\Big)=\frac{1}{\alpha_4}x,
\end{align*}
and the result follows.
\end{proof}
\begin{pro}
	There are polynomials $\tilde{\tilde A}^{(2)}_n $ such that $\hat{\hat  A}^{(2)}_n  =x\tilde{\tilde A}^{(2)}_n $. 
\end{pro}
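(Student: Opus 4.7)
The plan is to mirror the proof of the previous proposition verbatim, exploiting the fact that $\hat{\hat A}^{(2)}$ is a left eigenvector of the tetradiagonal Hessenberg matrix $\hat{\hat T}$, together with the two initial entries already computed in the preceding lemma.

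First, I would recall that by construction $\hat{\hat A}^{(2)}=A^{(2)}L_1L_2$ and that we have proven $\hat{\hat A}^{(2)}\hat{\hat T}=x\hat{\hat A}^{(2)}$. Since $\hat{\hat T}$ has the same tetradiagonal banded lower Hessenberg structure as $T$, with entries $\hat{\hat a}_n,\hat{\hat b}_n,\hat{\hat c}_n$ as displayed in the remark following the definition of the Darboux transforms, the left eigenvalue equation produces a four-term recurrence of the very same shape as Equation \eqref{eq:recursion_dual_A}. In particular, the first row contributes the identity
\begin{align*}
\hat{\hat c}_0 \hat{\hat A}^{(2)}_0+\hat{\hat b}_1\hat{\hat A}^{(2)}_1+\hat{\hat a}_2 \hat{\hat A}^{(2)}_2=x\hat{\hat A}^{(2)}_0,
\end{align*}
while for $n\geqslant 3$ one has the recursion
\begin{align*}
\hat{\hat A}^{(2)}_n\hat{\hat a}_n=-\hat{\hat A}^{(2)}_{n-1}\hat{\hat b}_{n-1}+\hat{\hat A}^{(2)}_{n-2}(x-\hat{\hat c}_{n-2})-\hat{\hat A}^{(2)}_{n-3}.
\end{align*}

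Next, I would apply the preceding lemma, which provides $\hat{\hat A}^{(2)}_0=0$ and $\hat{\hat A}^{(2)}_1=\frac{x}{\alpha_4}$. Plugging these into the first-row equation gives $\hat{\hat A}^{(2)}_2=-\frac{\hat{\hat b}_1}{\hat{\hat a}_2\alpha_4}x$, which is again of the form $x$ times a polynomial. This establishes the base of the induction.

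Finally, I would close with a straightforward induction: assuming $\hat{\hat A}^{(2)}_{n-1},\hat{\hat A}^{(2)}_{n-2},\hat{\hat A}^{(2)}_{n-3}$ are each divisible by $x$, the four-term recurrence above writes $\hat{\hat A}^{(2)}_n$ as a polynomial combination of these three terms (the factor $x-\hat{\hat c}_{n-2}$ being harmless), so $\hat{\hat A}^{(2)}_n$ is itself divisible by $x$. Dividing out yields the desired polynomial $\tilde{\tilde A}^{(2)}_n$. I do not anticipate any real obstacle: the only delicate point is guaranteeing that $\hat{\hat a}_n\neq 0$ so that the recursion can be solved for $\hat{\hat A}^{(2)}_n$, but this is immediate from the PBF hypothesis (all $\alpha_j>0$) together with the formula $\hat{\hat a}_n=\alpha_{3n+1}\alpha_{3n-1}\alpha_{3n-3}$ in the remark.
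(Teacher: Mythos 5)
Your proposal is correct and follows essentially the same route as the paper: the paper's own proof simply notes that the property holds for the first two entries $\hat{\hat A}^{(2)}_0$ and $\hat{\hat A}^{(2)}_1$ (from the preceding lemma) and then invokes the left eigenvector relation $\hat{\hat A}^{(2)}\hat{\hat T}=x\hat{\hat A}^{(2)}$ together with induction, exactly as you spell out via the first-row identity and the four-term recurrence. Your explicit remark that $\hat{\hat a}_n=\alpha_{3n+1}\alpha_{3n-1}\alpha_{3n-3}\neq 0$ is a fine point left implicit in the paper, and is a welcome addition rather than a deviation.
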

\begin{proof}
	It holds for the two first entries $\hat {\hat A}^{(2)}_0$ and $\hat {\hat A}^{(2)}_1$. Hence, from the recursion relation $\hat{\hat A}^{(2)}\hat {\hat T}= x\hat{\hat A}^{(2)}$ we get that it holds for any natural number $n$.
\end{proof}

We name the  polynomials $\hat A^{(1)}_n$, $\tilde{\tilde A}^{(1)}_n $, ${\tilde A}^{(2)}_n$ and $\tilde{\tilde A}^{(2)}_n $ as Darboux transformed polynomials of type I. 

\begin{pro}
Let us assume that $1+\nu\alpha_2=0$. The entries of the Darboux transformed  polynomials sequences of type I 
\begin{align*}
	\tilde A^{(2)}&=\frac{1}{x} A^{(1)}L_1, & \tilde{\tilde A}^{(1)}&=\frac{1}{x} A^{(1)}L_1L_2, & 	\hat  A^{(1)}&= A^{(2)}L_1, & \tilde{\tilde A}^{(2)}&=\frac{1}{x} A^{(2)}L_1L_2
\end{align*}
are given by
\begin{align*}
	\tilde A^{(2)}_n&=\frac{1}{x}\big(A^{(1)}_n+\alpha_{3n+2}A^{(1)}_{n+1}\big), & 	
\tilde 	{\tilde A}^{(1)}_n
&=\frac{1}{x}\big(A^{(1)}_n+(\alpha_{3n+2}+\alpha_{3n+3})A^{(1)}_{n+1}+\alpha_{3n+5}\alpha_{3n+3}A^{(1)}_{n+2}\big), \\
	\hat A^{(1)}_n&=A^{(2)}_n+\alpha_{3n+2}A^{(2)}_{n+1}, & 	
\tilde 	{\tilde A}^{(2)}_n&=\frac{1}{x}\big(A^{(2)}_n+(\alpha_{3n+2}+\alpha_{3n+3})A^{(2)}_{n+1}+\alpha_{3n+5}\alpha_{3n+3}A^{(2)}_{n+2}\big).
\end{align*}
\end{pro}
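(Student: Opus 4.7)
The proof is essentially a componentwise unfolding of four matrix products involving the bidiagonal factors $L_1$ and $L_2$. My plan is as follows.

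First, I would read off the nonzero entries of the factors directly from \eqref{eq:LU_J_oscillatory_factors}: in column $n$ of $L_1$ only $(L_1)_{n,n}=1$ and $(L_1)_{n+1,n}=\alpha_{3n+2}$ are nonzero, and analogously in column $n$ of $L_2$ only $(L_2)_{n,n}=1$ and $(L_2)_{n+1,n}=\alpha_{3n+3}$. For any row vector $A=(A_0,A_1,\dots)$ this gives immediately
\begin{align*}
(A L_1)_n &= A_n + \alpha_{3n+2}\,A_{n+1}.
\end{align*}
Applying this with $A=A^{(1)}$ yields the stated formula for $\tilde A^{(2)}_n = \frac{1}{x}(A^{(1)}L_1)_n$, once the divisibility by $x$ is granted; and applying it with $A=A^{(2)}$ gives the formula for $\hat A^{(1)}_n$, where no division by $x$ is needed.

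Next, I would compute the nontrivial entries of $L_1L_2$ in column $n$ by a two-term sum: $(L_1L_2)_{n,n}=1$, $(L_1L_2)_{n+1,n}=\alpha_{3n+2}+\alpha_{3n+3}$ and $(L_1L_2)_{n+2,n}=\alpha_{3n+3}\alpha_{3n+5}$. Hence, for any row vector $A$,
\begin{align*}
(A L_1 L_2)_n &= A_n + (\alpha_{3n+2}+\alpha_{3n+3})A_{n+1} + \alpha_{3n+3}\alpha_{3n+5}\,A_{n+2}.
\end{align*}
Setting $A=A^{(1)}$ and $A=A^{(2)}$ produces the stated componentwise expressions for $\tilde{\tilde A}^{(1)}_n$ and $\tilde{\tilde A}^{(2)}_n$, again modulo division by $x$.

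The only nonroutine point is justifying the division by $x$ in three of the four definitions. For $\tilde A^{(2)}$ and $\tilde{\tilde A}^{(2)}$, the divisibility of $\hat A^{(2)}$ and $\hat{\hat A}^{(2)}$ by $x$ was already established in the two previous Propositions, using precisely the hypothesis $1+\nu\alpha_2=0$. For $\tilde{\tilde A}^{(1)}=\frac{1}{x}A^{(1)}L_1L_2$ I would evaluate the $n=0$ component of $A^{(1)}L_1L_2$ at $x=0$, substitute $A^{(1)}_2|_{x=0}=-\frac{c_0+b_1\nu}{a_2}$ and use the identities $c_0=\alpha_1$, $b_1=(\alpha_2+\alpha_3)\alpha_1$, $a_2=\alpha_5\alpha_3\alpha_1$ (obtained by expanding $T=L_1L_2U$), which gives
\begin{align*}
(A^{(1)}L_1L_2)_0\big|_{x=0} = 1+(\alpha_2+\alpha_3)\nu - (1+(\alpha_2+\alpha_3)\nu) = 0.
\end{align*}
Once the zeroth component vanishes at $x=0$, the recursion $\hat{\hat A}^{(1)}\hat{\hat T}=x\hat{\hat A}^{(1)}$ propagates this property: evaluating at $x=0$ gives $\hat{\hat A}^{(1)}(0)\hat{\hat T}=0$, and since $\hat{\hat T}$ has positive subdiagonal and superdiagonal entries so that its finite leading principal submatrices are nonsingular, a straightforward induction on the column index using the four-term recursion forces all components of $\hat{\hat A}^{(1)}(0)$ to vanish, yielding divisibility by $x$ throughout. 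The main (and only real) obstacle is making this divisibility argument clean for $\tilde{\tilde A}^{(1)}$, since unlike the two cases handled in the previous Proposition it had not been addressed before; all remaining steps are direct bookkeeping of bidiagonal products.
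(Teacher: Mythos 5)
Your componentwise unfolding is correct and is in essence all the paper relies on: the proposition is stated there without a separate proof, because $(A L_1)_n=A_n+\alpha_{3n+2}A_{n+1}$ and $(A L_1L_2)_n=A_n+(\alpha_{3n+2}+\alpha_{3n+3})A_{n+1}+\alpha_{3n+5}\alpha_{3n+3}A_{n+2}$ are immediate from \eqref{eq:LU_J_oscillatory_factors}, and the divisibility by $x$ was settled in the preceding lemmas and propositions. One factual slip: the case $\tilde{\tilde A}^{(1)}$ was \emph{not} left unaddressed; the earlier proposition already asserts $\hat{\hat A}^{(1)}_n=x\tilde{\tilde A}^{(1)}_n$ under $1+\nu\alpha_2=0$, and this follows at once from the $\hat A^{(2)}$ case since $\hat{\hat A}^{(1)}=\hat A^{(2)}L_2$, so that $\hat{\hat A}^{(1)}_n=\hat A^{(2)}_n+\alpha_{3n+3}\hat A^{(2)}_{n+1}$ is a constant-coefficient combination of polynomials already known to vanish at $x=0$. (Also, the divisibility of $\hat{\hat A}^{(2)}$ did not use the hypothesis; only $\hat A^{(2)}$ does.)

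The genuine problem is your substitute argument for the divisibility of $A^{(1)}L_1L_2$ by $x$: from $\hat{\hat A}^{(1)}(0)\,\hat{\hat T}=0$ together with $\hat{\hat A}^{(1)}_0(0)=0$ you cannot conclude that all components vanish. For a banded lower Hessenberg matrix of this shape the column-$n$ equation reads $v_{n-1}+\hat{\hat c}_nv_n+\hat{\hat b}_{n+1}v_{n+1}+\hat{\hat a}_{n+2}v_{n+2}=0$, so a left null vector is determined by the \emph{two} seeds $v_0,v_1$; with only $v_0=0$ the column-$0$ equation gives $\hat{\hat b}_1v_1+\hat{\hat a}_2v_2=0$ and the induction never starts. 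Nonsingularity of leading principal submatrices does not rescue this, since the first $N$ column equations involve $v_0,\dots,v_{N+2}$, and in any case positivity of the sub- and superdiagonal does not yield it. To repair the step within your scheme you would also have to check $\hat{\hat A}^{(1)}_1(0)=0$ (which does hold, by a computation analogous to your $n=0$ one); more economically, either quote the earlier proposition as above, or note that $A^{(1)}(0)T=0$ means $A^{(1)}(0)L_1L_2U=0$ and that right multiplication by the upper bidiagonal $U$, whose diagonal entries $\alpha_{3n+1}$ are nonzero, is injective on row vectors, whence $A^{(1)}(0)L_1L_2=0$. With that step replaced, the rest of your proposal (the citations for $\tilde A^{(2)}$, $\tilde{\tilde A}^{(2)}$ and the four componentwise formulas) stands.
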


  \subsection{Spectral representation and Christoffel transformations}
  
 We identify the entries in the bidiagonal factorization \eqref{eq:bidiagonal} with simple rational expressions in terms of the recursion
  polynomials valuated at the origin.
\begin{teo}[Parametrization of the bidiagonal factorization]\label{teo:alphasBA}
	The $\alpha$'s in the bidiagonal factorization \eqref{eq:bidiagonal} can be expressed in terms of the recursion polynomials evaluated at $x=0$ as follows:
	\begin{align}
		\alpha_{3n+1}&=-\frac{B_{n+1}(0)}{B_{n}(0)},\label{eq:alphamod1}\\
		\alpha_{3n+2}&=-\frac{A^{(1)}_{n}(0)}{A^{(1)}_{n+1}(0)}, &\text{$1+\nu\alpha_2=0$ is required,}\label{eq:alphamod2}\\
\label{eq:alphamod3}
	\alpha_{3n+3}&
=
-\frac{A^{(1)}_{n}(0)A^{(2)}_{n+1}(0)-A^{(1)}_{n+1}(0)A^{(2)}_{n}(0)}{A^{(1)}_{n+1}(0) A^{(2)}_{n+2}(0)-A^{(1)}_{n+2}(0)A^{(2)}_{n+1}(0)}
	\frac{A^{(1)}_{n+2}(0)}{A^{(1)}_{n+1}(0)}.
\end{align}
The relations
\begin{align}\label{eq:quasi-det_alpha}
	\begin{bNiceMatrix}
		\alpha_{3n+2}+\alpha_{3n+3} & \alpha_{3n+5}\alpha_{3n+3}
	\end{bNiceMatrix}=-	\begin{bNiceMatrix}
		A^{(1)}_{n}(0) & 	A^{(2)}_{n}(0) 
	\end{bNiceMatrix}	\begin{bNiceMatrix}
		A^{(1)}_{n+1}(0) & A^{(2)}_{n+1}(0)\\[2pt]
		A^{(1)}_{n+2}(0) &A^{(2)}_{n+2}(0)
	\end{bNiceMatrix}^{-1}
\end{align}
are satisfied as well.
\end{teo}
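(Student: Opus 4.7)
The plan is to evaluate at $x=0$ the eigenvalue relations $TB(x)=xB(x)$ and $A^{(a)}(x)T=xA^{(a)}(x)$ for $a\in\{1,2\}$, and to exploit the bidiagonal factorization $T=L_1L_2U$ to peel off one triangular factor at a time, reading the coefficients directly from the known shapes of $L_1$, $L_2$ and $U$.

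First I would derive \eqref{eq:alphamod1}: setting $x=0$ in $TB=xB$ gives $L_1L_2UB(0)=0$, and since $L_1,L_2$ are unit lower bidiagonal, hence invertible, one obtains $UB(0)=0$. The $n$-th row of this identity reads $\alpha_{3n+1}B_n(0)+B_{n+1}(0)=0$, which yields \eqref{eq:alphamod1}.

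Next I would establish \eqref{eq:quasi-det_alpha} and then \eqref{eq:alphamod3}. Setting $x=0$ in $A^{(a)}T=xA^{(a)}$ produces $A^{(a)}(0)L_1L_2U=0$; using invertibility of $U$ (guaranteed by the nonvanishing of its diagonal entries, which follows from \eqref{eq:alphamod1} provided $B_n(0)\neq 0$), one deduces $A^{(a)}(0)L_1L_2=0$ for $a\in\{1,2\}$. A direct multiplication shows that $L_1L_2$ is lower triangular with ones on the diagonal, $\alpha_{3n+2}+\alpha_{3n+3}$ on the first subdiagonal, and $\alpha_{3n+5}\alpha_{3n+3}$ on the second subdiagonal. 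The $n$-th entry of each row equation $A^{(a)}(0)L_1L_2=0$ therefore reads
\[
A^{(a)}_n(0)+(\alpha_{3n+2}+\alpha_{3n+3})A^{(a)}_{n+1}(0)+\alpha_{3n+5}\alpha_{3n+3}A^{(a)}_{n+2}(0)=0,
\]
and stacking the two relations ($a=1,2$) yields a $2\times 2$ linear system for the pair $(\alpha_{3n+2}+\alpha_{3n+3},\,\alpha_{3n+5}\alpha_{3n+3})$; solving it is exactly \eqref{eq:quasi-det_alpha}. Cramer's rule for the second unknown, combined with \eqref{eq:alphamod2} shifted to $n+1$ to express $\alpha_{3n+5}=-A^{(1)}_{n+1}(0)/A^{(1)}_{n+2}(0)$, then isolates $\alpha_{3n+3}$ and produces \eqref{eq:alphamod3}.

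Finally, for \eqref{eq:alphamod2} I would exploit the hypothesis $1+\nu\alpha_2=0$ together with the earlier Proposition asserting $\hat A^{(2)}_n(x)=x\tilde A^{(2)}_n(x)$. Since by definition $\hat A^{(2)}=A^{(1)}L_1$, this identity says precisely $A^{(1)}(0)L_1=0$, and the $n$-th entry of this row equation is $A^{(1)}_n(0)+\alpha_{3n+2}A^{(1)}_{n+1}(0)=0$, which is \eqref{eq:alphamod2}. The only delicate step in the whole argument is this last one: it is exactly the normalization $1+\nu\alpha_2=0$ which allows one to peel off the factor $L_1$ by itself, rather than merely the product $L_1L_2$, and thereby separate $\alpha_{3n+2}$ from $\alpha_{3n+3}$ in the two-diagonal data of $L_1L_2$.
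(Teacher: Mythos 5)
Your proposal is correct and follows essentially the same route as the paper: evaluate the eigenvalue relations at $x=0$, use the bidiagonal shapes of $U$, $L_1$ and $L_1L_2$ to read off the component relations, solve the resulting $2\times 2$ system for $\big(\alpha_{3n+2}+\alpha_{3n+3},\,\alpha_{3n+5}\alpha_{3n+3}\big)$, and invoke $1+\nu\alpha_2=0$ precisely to peel off $L_1$ alone via $\hat A^{(2)}=A^{(1)}L_1$. The only minor difference is that you obtain $UB(0)=0$ and $A^{(a)}(0)L_1L_2=0$ directly by forward-substitution (injectivity) arguments from $TB=xB$ and $A^{(a)}T=xA^{(a)}$ at $x=0$, instead of citing the paper's preceding lemmas that $\hat B_n$, $\hat{\hat B}_n$ and $\hat{\hat A}^{(a)}_n$ are divisible by $x$; this is a valid shortcut (the nonvanishing of the diagonal entries $\alpha_{3n+1}$ being guaranteed in the oscillatory setting), not a different method.
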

\begin{proof}
Equation \eqref{eq:L1hatB} and the fact that $\hat B(0)=0$ gives that $UB(0)=0$. Hence, we get $\alpha_{3n+1}B_n(0)+B_{n+1}(0)=0$ and \eqref{eq:alphamod1} follow. Now, as $	\hat A^{(1)}= A^{(1)}L_1$ and $\hat A^{(1)}(0)=0$ implies $A^{(1)}(0)L_1=0$. Hence, $A_{n}^{(1)}(0)+A_{n+1}^{(1)}(0)\alpha_{3n+2}=0$ and we find \eqref{eq:alphamod2}.
To prove \eqref{eq:quasi-det_alpha} we  observe that 
\begin{align*}
	A^{(1)}_n(0)+(\alpha_{3n+2}+\alpha_{3n+3})A^{(1)}_{n+1}(0)+\alpha_{3n+5}\alpha_{3n+3}A^{(1)}_{n+2}(0)&=0,\\
		A^{(2)}_n(0)+(\alpha_{3n+2}+\alpha_{3n+3})A^{(2)}_{n+1}(0)+\alpha_{3n+5}\alpha_{3n+3}A^{(2)}_{n+2}(0)&=0,
\end{align*}
so that
\begin{align*}
\begin{bNiceMatrix}
	A^{(1)}_{n}(0) & 	A^{(2)}_{n}(0) 
		\end{bNiceMatrix}
	+	\begin{bNiceMatrix}
		\alpha_{3n+2}+\alpha_{3n+3} & \alpha_{3n+5}\alpha_{3n+3}
	\end{bNiceMatrix}
\begin{bNiceMatrix}
	A^{(1)}_{n+1}(0) & A^{(2)}_{n+1}(0)\\[2pt]
	A^{(1)}_{n+2}(0) &A^{(2)}_{n+2}(0)
	\end{bNiceMatrix}=0,
\end{align*}
and Equation \eqref{eq:quasi-det_alpha} follows.

This equation implies component-wise the following relations
\begin{align*}
	\alpha_{3n+2}+\alpha_{3n+3} &=-\frac{A^{(1)}_{n}(0)A^{(2)}_{n+2}(0)-A^{(1)}_{n+2}(0)A^{(2)}_{n}(0)}{A^{(1)}_{n+1}(0) A^{(2)}_{n+2}(0)-A^{(1)}_{n+2}(0)A^{(2)}_{n+1}(0)}, &
	\alpha_{3n+5}\alpha_{3n+3}&=-\frac{A^{(1)}_{n+1}(0)A^{(2)}_{n}(0)-A^{(1)}_{n}(0)A^{(2)}_{n+1}(0)}{A^{(1)}_{n+1}(0) A^{(2)}_{n+2}(0)-A^{(1)}_{n+2}(0)A^{(2)}_{n+1}(0)}.
	\end{align*}
Thus, we get Equation \eqref{eq:alphamod3}.
\end{proof}

With the previous identification we are ready to show the complete correspondence of the described Darboux transformations of the oscillatory banded Hessenberg matrix $T$ with Christoffel perturbations of the corresponding pair of positive Lebesgue--Stieltjes measures $(\d\psi_1,\d\psi_2)$. 

	\begin{teo}[Darboux vs Christoffel transformations] \label{theorem:3} For $\alpha_2=-\frac{1}{\nu}>0$,  the multiple orthogonal polynomial sequences $\big\{{	\tilde B}_n,\hat A^{(1)}_n,\tilde A^{(2)}_n\big\}_{n=0}^\infty$ and  $\big\{\tilde{	\tilde B}_n,\tilde{	\tilde A}^{(1)}_n,
\tilde{	\tilde A}^{(2)}_n\big\}_{n=0}^\infty$ correspond to the Christoffel transformations given in  \cite[Theorems 4 \& 6]{bfm} of the multiple orthogonal polynomial sequence  $\{ B_n,  A^{(1)}_n, A^{(2)}_n\}_{n=0}^\infty$. If the original couple of Lebesgue--Stieltjes measures is $(\d\psi_1,\d\psi_2)$, then  the corresponding transformed pairs of measures are $(\d\psi_2,x\d\psi_1)$ and $(x\d\psi_1,x\d\psi_2)$, respectively. 
\end{teo}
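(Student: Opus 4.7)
The plan is to combine three ingredients: the spectral Favard theorem from \cite{previo} for oscillatory banded Hessenberg matrices with a PBF, the Darboux cyclic-shift identities $\hat T = L_2 U L_1$ and $\hat{\hat T} = U L_1 L_2$, and the Christoffel formulas of \cite[Theorems 4 \& 6]{bfm} together with the $\alpha$-parametrization of Theorem \ref{teo:alphasBA}.

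First I would verify that the cyclically rotated products $\hat T$ and $\hat{\hat T}$ themselves admit PBFs of the structural form \eqref{eq:bidiagonal}, with bidiagonal parameters obtained by simple index-shifts of the original sequence $\{\alpha_j\}$. Since $\alpha_2 > 0$ by hypothesis and all other $\alpha_j>0$ by the assumed PBF of $T$, the transformed matrices are oscillatory banded Hessenberg matrices admitting a PBF. Applying the spectral Favard theorem of \cite{previo} to each yields a pair of positive Lebesgue--Stieltjes measures whose associated recursion polynomials are multiple orthogonal of types I and II. By construction these recursion polynomials must be $\{\tilde B_n,\hat A^{(1)}_n,\tilde A^{(2)}_n\}$ and $\{\tilde{\tilde B}_n,\tilde{\tilde A}^{(1)}_n,\tilde{\tilde A}^{(2)}_n\}$ respectively, because these sequences satisfy the correct eigenvalue equations for $\hat T$ and $\hat{\hat T}$ together with the required initial conditions; here the assumption $\alpha_2 = -1/\nu$ is precisely what forces $\hat A^{(2)}_0 = 0$, as shown in Lemma \ref{lemma:previous}, and hence fixes the type I normalizations to the standard ones.

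Next I would identify these new pairs with the claimed Christoffel perturbations. Formulas \eqref{eq:tildeB} combined with \eqref{eq:alphamod1}--\eqref{eq:quasi-det_alpha} express $\tilde B_n$ and $\tilde{\tilde B}_n$ as explicit linear combinations of $B_{n+1}, B_n, B_{n-1}$ with coefficients built from $B_k(0), A^{(1)}_k(0), A^{(2)}_k(0)$ and divided by $x$. These match the structural formulas for the Christoffel-transformed type II polynomials in \cite[Theorems 4 \& 6]{bfm} corresponding to the mixed perturbation $(d\psi_1,d\psi_2)\mapsto(d\psi_2, x\,d\psi_1)$ and the pure perturbation $(d\psi_1,d\psi_2)\mapsto(x\,d\psi_1, x\,d\psi_2)$, respectively. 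An entirely analogous verification on the dual side shows that $\hat A^{(1)}_n,\tilde A^{(2)}_n$ and $\tilde{\tilde A}^{(1)}_n,\tilde{\tilde A}^{(2)}_n$ coincide with the Christoffel-transformed type I polynomials of the same two perturbations, invoking again Theorem \ref{teo:alphasBA} to translate the $\alpha$-coefficients into evaluations at the origin of the original type II and type I polynomials.

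The step I expect to be the main obstacle is bookkeeping the swap that occurs in the first Darboux transformation. The cyclic rotation $L_1L_2U \to L_2UL_1$ interchanges the roles of the two left eigenvectors of $T$, which in turn encode the two measures through the initial conditions $A^{(1)}_0 = 1$, $A^{(2)}_0 = 0$ of Definition \ref{def:typeI}. This permutation is what produces the order $(d\psi_2, x\,d\psi_1)$ rather than $(x\,d\psi_1, d\psi_2)$, and tracking it carefully while reconciling the $\nu$-dependent normalization of Lemma \ref{lemma:previous} with the sign conventions of \cite{bfm} is the delicate point. Once the dictionary between the $\alpha_j$'s and the evaluations of the recursion polynomials at the origin is firmly in place through Theorem \ref{teo:alphasBA}, the remaining verification is a term-by-term algebraic matching between the Darboux side and the Christoffel side.
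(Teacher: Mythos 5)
Your proposal is correct and follows essentially the same route as the paper: the decisive step in both is to use Theorem \ref{teo:alphasBA} (together with \eqref{eq:alphas}) to rewrite the Darboux-transformed sequences from \eqref{eq:tildeB} and \eqref{eq:quasi-det_alpha} as combinations of $B_n$, $A^{(1)}_n$, $A^{(2)}_n$ with coefficients given by evaluations at the origin, and to recognize these as exactly the Christoffel formulas of \cite[Theorems 4 \& 6]{bfm} for $(\d\psi_1,\d\psi_2)\to(\d\psi_2,x\d\psi_1)$ and $(\d\psi_1,\d\psi_2)\to(x\d\psi_1,x\d\psi_2)$. Your preliminary detour through the positive bidiagonal factorization of $\hat T$, $\hat{\hat T}$ and the spectral Favard theorem of \cite{previo} is correct but not needed: the paper's proof is purely the algebraic matching, with the orthogonality of the transformed sequences supplied by \cite{bfm} itself.
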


\begin{proof}
Recalling \eqref{eq:alphamod2}, that $c_n=\alpha_{3n+1}+\alpha_{3n}+\alpha_{3n-1}$ and that $a_{n+1}=\alpha_{3n+2}\alpha_{3n}\alpha_{3n-2}$ we write
\begin{align*}
\alpha_{3n+1}+\alpha_{3n}&=\frac{A^{(1)}_{n-1}(0)}{A^{(1)}_{n}(0)}+c_n, &
\alpha_{3n}\alpha_{3n-2}&=-\frac{A^{(1)}_{n+1}(0)}{A^{(1)}_{n}(0)} a_{n+1}.
\end{align*}
Then, using Theorem \ref{teo:alphasBA} and the first equation in  \eqref{eq:tildeB} we get
\begin{align*}
	\tilde B_n &
	=\frac{1}{x}\bigg(B_{n+1}+\Big(\frac{A^{(1)}_{n-1}(0)}{A^{(1)}_{n}(0)}+c_n\Big)B_n-\frac{A^{(1)}_{n+1}(0)}{A^{(1)}_{n}(0)} a_{n+1}B_{n-1}\bigg), \\
		\hat A^{(1)}_n&=A^{(2)}_n-\frac{A^{(1)}_{n}(0)}{A^{(1)}_{n+1}(0)}A^{(2)}_{n+1},\\
			\tilde A^{(2)}_n&=\frac{1}{x}\bigg(A^{(1)}_n-\frac{A^{(1)}_{n}(0)}{A^{(1)}_{n+1}(0)}A^{(1)}_{n+1}\bigg).
\end{align*}
These three equations are the Christoffel formulas in \cite[Theorem 4]{bfm} for the permuting Christoffel transformation
$(\d\psi_1,\d\psi_2)\to (\d\psi_2,x\d\psi_1)$.
Also, using again Theorem \ref{teo:alphasBA},  the second equation in  \eqref{eq:tildeB} and \eqref{eq:quasi-det_alpha} we get
\begin{align*}
	\tilde{	\tilde B}_n&=\frac{1}{x}\bigg(B_n-\frac{B_{n+1}(0)}{B_{n}(0)}B_{n+1}\bigg),\\
	\tilde{	\tilde A}^{(1)}_n&=\frac{1}{x}\bigg(A^{(1)}_n-	\begin{bNiceMatrix}
		A^{(1)}_{n}(0) & 	A^{(2)}_{n}(0) 
	\end{bNiceMatrix}	\begin{bNiceMatrix}
		A^{(1)}_{n+1}(0) & A^{(1)}_{n+2}(0)\\[2pt]
		A^{(2)}_{n+1}(0) &A^{(2)}_{n+2}(0)
	\end{bNiceMatrix}^{-1}\begin{bNiceMatrix}
A^{(1)}_{n+1}\\[2pt]A^{(1)}_{n+2}
\end{bNiceMatrix}\bigg),\\
		\tilde{	\tilde A}^{(2)}_n&=\frac{1}{x}\bigg(A^{(2)}_n-	\begin{bNiceMatrix}
			A^{(1)}_{n}(0) & 	A^{(2)}_{n}(0) 
		\end{bNiceMatrix}	\begin{bNiceMatrix}
			A^{(1)}_{n+1}(0) & A^{(1)}_{n+2}(0)\\[2pt]
			A^{(2)}_{n+1}(0) &A^{(2)}_{n+2}(0)
		\end{bNiceMatrix}^{-1}\begin{bNiceMatrix}
			A^{(2)}_{n+1}\\[2pt]A^{(2)}_{n+2}
		\end{bNiceMatrix}\bigg).
\end{align*}
These three equations are the Christoffel formulas in \cite[Theorem 6]{bfm} for the  Christoffel transformation
$(\d\psi_1,\d\psi_2)\to (x\d\psi_1,x\d\psi_2)$.
\end{proof}

\end{document}